\documentclass[11pt]{article}
\usepackage{graphics}
\usepackage{graphicx}
\usepackage{epsfig}
\usepackage{amsmath}
\DeclareMathOperator{\sign}{sign}
\usepackage{amsfonts}
\usepackage{amssymb}
\usepackage{xcolor}
\usepackage{enumerate}
\usepackage{subfigure}
\usepackage[export]{adjustbox}
\makeatletter
\@addtoreset{equation}{section}
\makeatother
\usepackage{pst-plot}
\psset{xunit=15mm}

\numberwithin{equation}{section}
\usepackage{tikz}
\makeatletter
\@namedef{subjclassname@2020}{\textup{2020} Mathematics Subject Classification}
\makeatother

\usepackage{a4wide,amsmath,amssymb,latexsym,amsthm}

\usepackage{enumitem}

\usepackage[colorlinks=true,urlcolor=blue,
citecolor=red,linkcolor=blue,linktocpage,pdfpagelabels,
bookmarksnumbered,bookmarksopen]{hyperref}
\usepackage[english]{babel}

\usepackage{setspace}

\newcommand{\1}{\mathbbm{1}}

\usepackage{authblk}    

\numberwithin{equation}{section}
\usepackage{mathrsfs,mathtools,epic,bm}
\usepackage{hyperref}
\hypersetup{colorlinks=true,linkcolor=blue,filecolor=mangeta,urlcolor= cyan}

\newtheorem{theorem}{Theorem}[section]
\newtheorem{proposition}[theorem]{Proposition}
\newtheorem{lemma}[theorem]{Lemma}
\newtheorem{corollary}[theorem]{Corollary}
\theoremstyle{definition}

\newtheorem{remark}[theorem]{Remark}

\def \dis {\displaystyle}

\newcommand{\sgn}{\operatorname{sign}}

\def \R {\mathbb{R}}

\def \B {\mathbf{B}}
\def \M {\mathcal{M}}

\def \C {\mathbf{C}}

\def \F {\mathbf{F}}
\def \D {\mathbf{D}}
\def \M {\mathcal{M}}
\def \L {\mathcal{L}}
\def \BUC{\mathrm{BUC}}
\def \V {\mathcal{V}}

\def \K {\mathcal{K}}
\def \c{c_{k_1,k_2,r,d}}

\numberwithin{equation}{section}

\makeatletter
\renewcommand{\@seccntformat}[1]{\csname the#1\endcsname.\quad}
\makeatother

\def \dis {\displaystyle}
\title{Monotonicity of the propagation speed with respect to the diffusion in a Lotka-Volterra competition-diffusion system }
\author[]{Cyrille Kenne\thanks{Corresponding author}}

\affil[]{\footnotesize Department of Mathematics, University of British Columbia, Vancouver, BC V6T 1Z2, Canada}

\date{\today}
	
\begin{document}
\maketitle
\footnotetext{Email address: \href{mailto:kenne@math.ubc.ca}{kenne@math.ubc.ca}}

\begin{abstract}
In this paper, we study the dependence of the propagation speed on the diffusion ratio in a Lotka-volterra competition-diffusion system under strong competition.  The model is known to admits a unique monotone travelling front connecting two exclusion equilibria and the sign of its speed determines which species invades the territory occupied by the other.  We establish smoothness results for the wave speed and the wave profile. Our main result shows that the wave speed is a strictly decreasing function of the diffusion ratio on explicit unbounded  regions of parameter space. This monotonicity had been observed numerically but, had not previously been proved. The proof combines the smooth dependence of both the travelling front and its speed on the diffusion ratio, an adjoint identity for the linearized operator and a maximum principle argument. At zero speed, we also prove that there exists a smooth threshold, we obtain an exact formula for its derivative and deduce its monotonicity. These threshold results also provide a complete characterization of the sign of the wave speed.
\end{abstract}

\textbf Mathematics Subject Classification. {35C07, 35K55,	35K57, 92D25, 92D40 }\par
\noindent
{\textbf {Key-words}}~:~  Lotka-Volterra; travelling wave; monotonicity; competition-diffusion model.

\section{Introduction}
In this paper, we  consider the following Lotka-Volterra system  
\begin{equation}\label{model}
\left\{ 
\begin{array}{llllll}
	u_t&=&u_{xx}+u(1-u-k_1v), & \text{ in } \; (0,\infty)\times \R,\\
	v_t&=&dv_{xx}+rv(1-v-k_2u),& \text{ in } \; (0,\infty)\times \R,
\end{array} 
\right.
\end{equation}
 where $k_1,k_2>1$, $d>0$ and $r>0$. Here, the parameter $d$ is the diffusion coefficient of the species $v$ relative to the species $u$ and $r$ is its intrinsic growth rate relative to that of $u$. The parameters $k_1$ and $k_2$ represent the interspecific competition coefficients.
 
 Spatial dispersal can affect not only the rate at which a population spread but also the outcome of the competition between two species. In the strong competition case  ($k_1,k_2>1$), it is well known that the system \eqref{model} has two locally stable exclusion equilibria  $(1,0)$ and $(0,1)$. The interface separating the territories of species $u$ and $v$ can be then described by a bistable travelling wave.  The sign of the wave speed determines which species invades the other, while its absolute value give the rate at which this invasion occurs.
 
Throughout the paper, we adopt the following notion of monotone travelling wave. A monotone travelling front solution to \eqref{model}  with speed $c$ is a solution of the form,
 \begin{equation*}
 	(u,v)(t,x)=(U,V)(\xi), \qquad \xi=x-ct,
 \end{equation*}
such that
 \begin{equation}\label{eq1}
 	\begin{array}{llllll}
 	U''+cU'+U(1-U-k_1V)&=&0,\\
 	dV''+cV'+rV(1-V-k_2U)&=&0, 
 	\end{array} 
 \end{equation}
 with 
 \begin{equation}\label{eq2}
 	(U,V)(-\infty)=(1,0), \qquad (U,V)(+\infty)=(0,1) 
 \end{equation}
 and 
  \begin{equation}\label{eq2a}
 U'<0<V' \; \text{ in } \R.
 \end{equation}

Here, $c<0$ means that the species $v$ invades species $u$ and $c>0$ means that the species $u$ invades the species $v$. 
 
The classical theory of travelling waves for bistable competition-diffusion systems has been developed in \cite{conley1984, gardner1982, kan1995, kan1996}.  In particular, it well known that for every $k_1,k_2>1$ and $r,d>0$, there exists a unique real number $c:=c_{k_1,k_2,r,d}$ for which \eqref{eq1}-\eqref{eq2a} admits a solution $(U,V):=(U_{k_1,k_2,r,d}, V_{k_1,k_2,r,d})$. This solution belongs to $C^\infty(\R)^2$, is unique up to a translation and satisfies $0<U(\xi),V(\xi)<1$ for every $\xi\in\R$.  It is also known that if we normalize the front by assuming that  $U(0)=\frac{1}{2}$, then the front is unique \cite{girardin2015}. In this paper, we will consider such normalized front.

Determining the sign of the wave speed has been extensively studied in  \cite{alfaro2023, chang2023, chen2026, girardin2015, guo2013, ma2019, morita2023,  rodrigo2000,rodrigo2001, risler2017, xiao2025} and the references therein. A recent work \cite{nakamura2026} substantially enlarged the regions of parameters in which  previously results were known. We also refer to \cite{girardin2019} for an excellent survey on the topic. For related results on spreading speeds in Lotka-Volterra systems, see \cite{carrere2018, peng2021}. 

We mention that when completing this manuscript, a preprint \cite{ma2026} appeared. Its authors obtained the continuity of the zero speed threshold, its strict monotonicity, endpoint values and the complete propagation direction for the system \eqref{model}.  Thus, these results overlap with the characterization of the zero speed threshold presented here. In addition, we establish the smoothness of the threshold and derive an explicit formula for its derivative (see \eqref{derivK}). Thereby providing additional information on the threshold mentioned in \cite[Remark 5.6]{ma2026}. 

In our previous work \cite{kenne2026}, we proved that when $r=1$ and $k_1=k_2:=k>1$,
\begin{equation*}
	\sign c_{k,k,1,d}=\sign(1-d),
\end{equation*}
thereby establishing the ``Unity is not strength'' theorem (see e.g., \cite{girardin2019, girardin2018}): under equal intrinsic growth rates and strong competition, the faster diffuser prevails. As a first observation, we can adapt the scaling argument of \cite{guo2013}, to obtain the following result for every $r>0$ and for $k_1=k_2:=k>1$,
\begin{equation*}
	\sign c_{k,k, r,d}=\sign(r-d).
\end{equation*}
Consequently, species $v$ invades species $u$ when $d>r$, species $u$ invades species $v$ when $d<r$ and the front is standing when $d=r$. This means that, the direction of propagation is determined by the  ratio $\frac{d}{r}$. This determines the direction of propagation, but not how the signed speed varies with the diffusion ratio $d$. 

Kan-On \cite{kan1995} established monotone dependence of the propagation speed with respect to the reactions parameters. They obtained that the speed is strictly decreasing in $k_1$ and strictly increasing in $k_2$. The main idea was to differentiate the travelling-wave system with respect to the parameters under consideration and to test the resulting linearized equation against a bounded solution of the adjoint problem. They thus obtained an integral representation for the derivative of the wave speed and used the monotonicity properties of the wave profile as well as sign properties of the adjoint egeinfunction to determine the sign of this derivative.  However, their results do not cover the monotonicity with respect to the diffusion ratio $d$. 

In \cite{alzahrani2012}, Alzahrani et al. studied how relative motility and competition strength interact to determine the direction of bistable travelling wave in Lotka-Volterra  competition systems. They identified regions in which the motility rate can slow, halt or reverse  the invasion.  Their numerical simulations suggested that the wave speed is increasing with respect to the relative motility. Based on that, they conjectured that the wave speed is an increasing function of the relative motility.  We mention that their convention is opposite to ours and so, in our setting, their conjecture means the wave speed is decreasing with respect to $d$. Girardin \cite{girardin2019}  also observed a form of monotonicity of the speed  with respect to $d$ in numerical computations. We also mention \cite{hosono1998}. Risler \cite{risler2017}, developed a first-order variation for the speed under small perturbations of the diffusion coefficient.  Despite the numerical observations, no previous result has established the strict monotonicity of the bistable wave speed on the diffusion ratio in the Lotka-Volterra competition system \cite{girardin2019, girardin2015, nakamura2026}. 

One of the main purposes of this paper is to study the monotonicity of the wave speed. We prove that the wave speed is strictly decreasing with respect to the diffusion ratio $d$ on explicit unbounded parameters ranges. Our main result is the following.
\begin{theorem}\label{maintheorem}
	Let $k_1,k_2>1$ and $r,d>0$. Suppose that 
\begin{equation}\label{condition0}
	\c\leq 0	 \; \text{ and }\; \c\left(1-\frac{1}{d}\right)\leq 0.
\end{equation}
	Then 
	\begin{equation}\label{eq5}
		\partial_d \c <0.
	\end{equation}             
In particular, \eqref{eq5} holds true at every standing front.
\end{theorem}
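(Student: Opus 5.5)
The plan is to differentiate the travelling-wave problem \eqref{eq1}--\eqref{eq2a} with respect to $d$, in the spirit of Kan-On's argument for $k_1,k_2$, and then read off the sign of $\partial_d \c$ from a solvability condition.

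\textbf{Step 1: smoothness and the differentiated system.} Using the smooth dependence of the normalized front $(U,V)$ and of $\c$ on $d$ (obtained via the implicit function theorem on the linearized operator, whose kernel is spanned by $(U',V')$), set $(P,Q)=\partial_d(U,V)$ and $c'=\partial_d\c$. Writing $\mathcal L$ for the linearization of \eqref{eq1} at $(U,V)$, differentiation gives
\begin{equation*}
\mathcal L(P,Q) = -c'(U',V') - (0,V'').
\end{equation*}

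\textbf{Step 2: the adjoint solvability condition.} Let $(\varphi,\psi)$ be the bounded, exponentially decaying solution of $\mathcal L^*(\varphi,\psi)=0$. By the cooperative structure obtained after changing the sign of one component, it can be normalized so that $\varphi<0<\psi$, the same sign pattern as $(U',V')$. Testing Step 1 against $(\varphi,\psi)$ gives
\begin{equation*}
c'\,D=-\int_\R V''\psi, \qquad D:=\int_\R (U'\varphi+V'\psi)>0 .
\end{equation*}
Hence \eqref{eq5} is equivalent to $\int_\R V''\psi>0$.

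\textbf{Step 3: a virial-type identity.} A direct computation shows
\begin{equation*}
\mathcal L(\xi U',\xi V')=(2U''+cU',\,2dV''+cV').
\end{equation*}
Testing this against $(\varphi,\psi)$ gives
\begin{equation*}
2\int_\R U''\varphi+2d\int_\R V''\psi=-c\,D .
\end{equation*}
Combining this with the identity $\int_\R V''\psi=-\int_\R V'\psi'$ and the analogous identity for $\int_\R U''\varphi$ expresses $c'$ through $c$, $D$ and a single remaining integral, for instance
\begin{equation*}
c'=\frac{c}{2d}+\frac{1}{dD}\int_\R U''\varphi .
\end{equation*}
One can instead keep a weighted combination $\int_\R U''\varphi+\int_\R V''\psi$, with weights chosen so that the prefactor in front of $c$ becomes proportional to $c$ or to $c(1-\tfrac1d)$. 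This is where \eqref{condition0} should enter: it makes the $c$-terms have the right sign, and it makes them vanish when $c=0$.

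\textbf{Step 4: the sign of the residual integral (main obstacle).} It remains to show that the remaining integral has the correct sign. I would first try a maximum-principle comparison between $(\varphi,\psi)$ and $(-U',V')$ (suitably rescaled), or between $\varphi'$ and $\varphi$, using the adjoint equations
\begin{equation*}
\varphi''-c\varphi'+(1-2U-k_1V)\varphi-rk_2V\psi=0,\qquad d\psi''-c\psi'+r(1-2V-k_2U)\psi-k_1U\varphi=0 .
\end{equation*}
The goal is a sign property of the profile, for example that $\psi'\le0$ wherever $V'$ carries most of its mass, or that $\int_\R U''\varphi\le 0$. This would be obtained by a sliding or sweeping argument in $\xi$, exploiting that $c\le0$ fixes the sign of the first-order drift term. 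This is the step I expect to be delicate, since the adjoint profiles are not a priori monotone.

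The standing-front case $c=0$ then follows immediately, since \eqref{condition0} holds trivially when $c=0$.
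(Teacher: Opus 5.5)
\textbf{There is a genuine gap: Step 4 is never carried out, and it contains the whole content of the theorem.}

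Steps 1 and 2 coincide with the paper, and your virial identity in Step 3 is correct. However, you leave Step 4 as a plan, and the routes you suggest (monotonicity of the adjoint profiles, sliding in $\xi$, or proving $\int_\R U''\varphi\le 0$) are not what makes the argument work. Your virial identity contains only integral information. Write $p=-U'$, $q=V'$, $a=-\varphi$, $b=\psi$. Then the virial identity is exactly the integrated form of the pointwise identity
\begin{equation*}
G:=a'p-p'a-c\,ap=d\,(q'b-b'q)+c\,bq .
\end{equation*}
This holds pointwise because both expressions have derivative $k_1Uqa-rk_2Vbp$ (cross-multiply the equations for $(p,q)$ and $(a,b)$) and both vanish at $\pm\infty$. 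What is missing is the pointwise sign $G>0$. Your intermediate target $\int_\R U''\varphi\le 0$ is also not the right one. Indeed, $\int_\R U''\varphi=\int_\R p'a=-\tfrac12\int_\R G-\tfrac c2\int_\R ap$, which has no evident sign when $c<0$. The quantity that can actually be controlled is $c'=\frac{1}{2dD}\bigl(c\int_\R bq-\int_\R G\bigr)$.

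\textbf{The missing idea (the paper's key step).} Differentiate $G'=k_1Uqa-rk_2Vbp$ once more, and eliminate $a'$ and $q'$ using the two expressions of $G$. This gives the scalar equation
\begin{equation*}
G''-\Bigl(\frac{b'}{b}+\frac{p'}{p}\Bigr)G'-k_1U\Bigl(\frac{a}{db}+\frac{q}{p}\Bigr)G=q\Bigl[k_1c\Bigl(1-\frac1d\Bigr)Ua-p\,(k_1a+rk_2b)\Bigr].
\end{equation*}
Its zeroth-order coefficient is negative. Its right-hand side is negative precisely when $c(1-\frac1d)\le0$; the factor $1-\frac1d$ comes from the different diffusion coefficients of the two components. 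Since $G(\pm\infty)=0$, the maximum principle gives $G>0$ on $\R$. Then, using $c\le0$,
\begin{equation*}
\int_\R V''\psi=\int_\R q'b=\tfrac12\int_\R(q'b-qb')=\tfrac{1}{2d}\int_\R(G-c\,bq)>0 .
\end{equation*}

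In your scheme the hypothesis $c(1-\frac1d)\le0$ never appears, which is a sign that this mechanism is missing. No monotonicity of the adjoint profiles $a$ or $b$ is needed. What is needed is positivity of this cross-Wronskian between the kernel element $(p,q)$ and the adjoint-kernel element $(a,b)$.
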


In the symmetric case, we can obtain the exact parameters regions in which the speed is monotone with respect to the diffusion ratio $d$. 
\begin{corollary}\label{corollary1}
Let $k_1=k_2:=k>1$ and $r,d>0$. Suppose that either  $(i)$ $d>r$ and $d\geq 1$ or $(ii)$ $d=r$ holds. Then 
	\begin{equation}\label{eq5a}
	\partial_d c_{k,k,r,d}<0.
	\end{equation}             
This means that when $r\geq1$, the function $d\mapsto c_{k,k,r,d}$ is strictly decreasing on $[r, \infty)$ and when $0<r<1$,  the function $d\mapsto c_{k,k,r,d}$ is strictly decreasing on  $[1,\infty)$ and in addition,
\begin{equation*}
\left. \partial_d c_{k,k,r,d} \right|_{d=r}<0.
\end{equation*}                                                                                          
\end{corollary}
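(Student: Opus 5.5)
The corollary is a direct consequence of Theorem~\ref{maintheorem} combined with the sign identity $\sign c_{k,k,r,d}=\sign(r-d)$ for the symmetric case $k_1=k_2=k>1$, obtained in the introduction by adapting the scaling argument of \cite{guo2013}. The plan is to verify, in each of the two cases, that the hypotheses \eqref{condition0} hold at the given point $(k,k,r,d)$. Then \eqref{eq5} yields \eqref{eq5a}. Finally, I will translate the pointwise statement into monotonicity on intervals.

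\textbf{Case $(i)$: $d>r$ and $d\ge 1$.} The sign identity gives $c_{k,k,r,d}<0$, so the first condition in \eqref{condition0} holds. Since $d\ge 1$, we have $1-\frac1d\ge 0$. Hence $c_{k,k,r,d}\bigl(1-\frac1d\bigr)\le 0$ as the product of a negative and a nonnegative number. Theorem~\ref{maintheorem} then gives $\partial_d c_{k,k,r,d}<0$.

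\textbf{Case $(ii)$: $d=r$.} The sign identity gives $c_{k,k,r,r}=0$, so both inequalities in \eqref{condition0} hold trivially. This is the standing-front case already singled out in Theorem~\ref{maintheorem}, and it gives $\left.\partial_d c_{k,k,r,d}\right|_{d=r}<0$.

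For the interval statements, I use that $d\mapsto c_{k,k,r,d}$ is differentiable; this is the smoothness in $d$ that the paper establishes and that Theorem~\ref{maintheorem} already presupposes. If $r\ge 1$, every $d\in[r,\infty)$ falls under $(i)$ (when $d>r$, since then $d>r\ge1$) or under $(ii)$ (when $d=r$). So the derivative is strictly negative on the whole interval $[r,\infty)$, and the mean value theorem gives strict decrease there.

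If $0<r<1$, every $d\in[1,\infty)$ satisfies $d>r$ and $d\ge1$, so case $(i)$ applies and the function is strictly decreasing on $[1,\infty)$. The separate point $d=r<1$ is covered by $(ii)$.

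The only delicate point is that the sign identity must hold for all $r>0$, not just $r=1$. I take it from the scaling remark in the introduction: rescaling $\xi$ reduces the system to the $r=1$ case with diffusion ratio $d/r$, and then \cite{kenne2026} applies. Beyond this, the proof is pure bookkeeping.
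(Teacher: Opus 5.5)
Your proof is correct and follows the paper's argument: in each case you check \eqref{condition0} using the sign identity of Lemma~\ref{lemmasign} and apply Theorem~\ref{maintheorem}. Your mean-value-theorem step, using the smoothness of $d\mapsto c_{k,k,r,d}$ from Theorem~\ref{theosmooth}, spells out the interval statements that the paper leaves implicit.

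One small correction to your closing remark: rescaling $\xi$ cannot reduce a moving front to the case $r=1$. Dividing the $V$-equation by $r$ leaves speed $c/r$ there but $c$ in the $U$-equation, so the reduction works only when $c=0$. Lemma~\ref{lemmasign} instead rests on the sign invariance $\sign c_{k,k,r,d}=\sign c_{k,k,lr,ld}$ from \cite{guo2013}. Since you simply invoke the lemma, your proof is unaffected.
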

The proof of Theorem \ref{maintheorem} combines the differentiability of the travelling wave with respect to $d$ and an adjoint identity for the linearized operator as in \cite{kan1995}. The main difficulty is that the differentiation of the second equation in \eqref{eq1} with respect to $d$ gives a term involving $V''$ whose sign is not fixed. We exploit the properties of the system as well as those of the adjoint system and a maximum principle argument. To our knowledge, this is the first analytic proof of strict monotonicity with respect to $d$ on an unbounded interval. The global monotonicity remains open. 

We also provide a characterization of the zero speed set. The existence, continuity, strict monotonicity and endpoint values stated below are also in \cite[Theorem 1.3]{ma2026}. Our proofs are independent of the proofs of \cite{ma2026}. In addition, we obtain  the $C^\infty$ regularity and the exact formula \eqref{derivK}. 
\begin{theorem}\label{maintheorem1}
	Let $k_2>1$. There exists a unique function $\K(\cdot;k_2)$ such that
	\begin{equation}\label{zeroarie}
	\c=0\Longleftrightarrow k_1=\K\left(\frac{d}{r};k_2\right).
	\end{equation}
It is of class $C^\infty$, strictly decreasing on $(0,\infty)$ and satisfies:
	\begin{align}
		&\sqrt{k_2}<\K(\delta;k_2)<k_2^{2}\quad\text{for every }\delta>0,\qquad \K(1;k_2)=k_2,
		\label{bound1}\\
		&\lim_{\delta\downarrow0}\K(\delta;k_2)=k_2^{2},\qquad
		\lim_{\delta\to+\infty}\mathcal K(\delta;k_2)=\sqrt{k_2}.\label{limit1}
	\end{align}
	More precisely, let  $(U,V)$ be the standing monotone front at
	$(k_1,k_2,r,d)=(\K(\delta;k_2),k_2, 1,\delta)$. We set $p=-U'>0$, $q=V'>0$  and we consider
	$\Psi=(-a,b)^T$ with $a,b>0$ as given in Proposition \ref{propfredholm}. Then
	\begin{equation}\label{derivK}
	\partial_\delta\K(\delta;k_2) =-\frac{\dis\int_\R Wd\xi}{2\delta\dis\int_\R aUVd\xi}<0,
	\end{equation}
	where $W:=a'p-ap'=\delta(bq'-b'q)>0.$
The function $\K(\cdot; k_2)$ is therefore a decreasing bijection from $(0,\infty)$ onto $\bigl(\sqrt{k_2},k_2^{2}\bigr)$.
\end{theorem}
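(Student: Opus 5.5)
Throughout, I take as given the facts the theorem itself relies on: smooth dependence of $(U,V)$ and $c$ on the parameters, with $\partial_{k_1}c<0<\partial_{k_2}c$ as in \cite{kan1995}; the Fredholm/adjoint structure of Proposition \ref{propfredholm}; and the symmetric-case identity $\sign c_{k,k,r,d}=\sign(r-d)$.

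\emph{Reduction to one ratio.} Rescaling time and space by $t\mapsto rt$ and $x\mapsto\sqrt{r}\,x$ turns the system with $(r,d)$ into one whose wave speed has the same sign as that of the system with $(1,d/r)$. Hence the zero set depends only on $\delta=d/r$, and it suffices to study $F(k_1,\delta):=c_{k_1,k_2,1,\delta}$ for fixed $k_2>1$.

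\emph{Existence, uniqueness and smoothness of $\K$.} First I establish, for each $\delta>0$, that $k_1\mapsto F(k_1,\delta)$ has exactly one zero. Since $\partial_{k_1}F<0$, the map is strictly decreasing, so there is at most one zero. For existence I need sign information at two values of $k_1$. The bounds \eqref{bound1} suggest the candidates: $F(\sqrt{k_2},\delta)>0$ and $F(k_2^2,\delta)<0$. These would come from known sign results (\cite{girardin2015,ma2019,nakamura2026}), or directly from a sub/super-solution comparison with the limiting fronts. The intermediate value theorem then gives the zero $\K(\delta)$. Smoothness follows from the implicit function theorem, because $F$ is $C^\infty$ and $\partial_{k_1}F\neq0$. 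The identity $\K(1)=k_2$ comes from $\sign c_{k,k,1,1}=0$. The limits \eqref{limit1} follow from monotonicity together with the bounds, provided one identifies the limits as $\delta\to0$ and $\delta\to\infty$. That requires singular-limit analysis: letting $d\to0$ or $d\to\infty$ in the standing-front integral identities, combined with the strict bounds. This is the step I expect to be the main obstacle. The strict bounds for all $\delta$ presumably use the sharp comparison criteria of \cite{nakamura2026} or \cite{ma2026}, whereas I would attempt an energy-type identity at $c=0$.

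\emph{Derivative formula.} Differentiate the standing-front equations
\[
U''+U(1-U-\K V)=0,\qquad \delta V''+V(1-V-k_2U)=0
\]
along $\delta$ with $k_1=\K(\delta)$, writing $\dot U=\partial_\delta U$ and $\dot V=\partial_\delta V$. This yields
\[
L(\dot U,\dot V)^T=\bigl(\K' UV,\;V''\bigr)^T,
\]
up to signs, where $L$ is the linearization whose adjoint kernel is spanned by $\Psi=(-a,b)$. Solvability requires $\langle \Psi, \text{RHS}\rangle=0$, which gives
\[
\K'\int aUV=\int bV''=-\int b'q .
\]
Next, using the adjoint equations for $a$ and $b$ together with $p=-U'$ and $q=V'$, which lie in the kernel of $L$, I compute Wronskian-type expressions. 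The combination $W=a'p-ap'$ satisfies $W'=(\text{coupling terms})=-\delta(bq'-b'q)'$, so $W=\delta(bq'-b'q)$ since both vanish at infinity. Integrating $bq'-b'q$ and using $\int bq'=-\int b'q$ gives $\int W=-2\delta\int b'q$. Substituting into the solvability relation yields \eqref{derivK}.

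\emph{Sign of $W$.} Positivity of $W$ follows from a maximum-principle argument: $W'$ has a fixed sign determined by the coupling terms $a\,q$ and $b\,p$, all of which are positive, and $W\to0$ at $\pm\infty$. Alternatively, one argues via the ratio $a/p$ being monotone. Either way $\K'<0$, so $\K$ is a decreasing bijection onto $(\sqrt{k_2},k_2^2)$.
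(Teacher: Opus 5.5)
Your implicit-function step and the solvability/Wronskian bookkeeping follow the paper's route. There is, however, a genuine gap exactly where the theorem's new content lies: the positivity of $W$.

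Your argument that $W'$ has a fixed sign is false. Here $W'=k_1Uaq-k_2Vbp$ (with $k_1=\K(\delta;k_2)$) is a difference of two positive terms. In fact no argument of this type can work: a function whose derivative has a fixed sign and which tends to $0$ at both $\pm\infty$ vanishes identically. The alternative via monotonicity of $a/p$ is circular, since $W=p^2(a/p)'$.

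The missing idea is to use both representations $W=a'p-ap'=\delta(bq'-b'q)$ at second order. Differentiate $W'=k_1Uaq-k_2Vbp$ once more and substitute $a'=W/p+(p'/p)a$ and $q'=W/(\delta b)+(b'/b)q$. The remaining first-order terms recombine into $(b'/b+p'/p)W'$, which gives
\begin{equation*}
W''-\Bigl(\frac{b'}{b}+\frac{p'}{p}\Bigr)W'-k_1U\Bigl(\frac{a}{\delta b}+\frac{q}{p}\Bigr)W=-pq\,(k_1a+k_2b)<0 .
\end{equation*}
Since $W\to0$ at $\pm\infty$, a nonpositive value of $W$ would produce a nonpositive minimum at a finite point. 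There the left-hand side is $\geq0$, a contradiction.

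There are also two sign slips. The correct linearized equation is $L(\dot U,\dot V)^T=(\K'UV,\,-V'')^T$, so the solvability condition is $\K'\int_\R aUV\,d\xi=-\int_\R bV''\,d\xi$. Also $W'=+\delta(bq'-b'q)'$, not $-\delta(bq'-b'q)'$. With your sign, $\int_\R bV''\,d\xi=\frac{1}{2\delta}\int_\R W\,d\xi>0$ would give $\K'>0$.

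The global assertions are also left unproved.

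For the limits, which you flag as the main obstacle, the paper does not analyse integral identities in singular limits.
\begin{itemize}
\item For $\delta\downarrow0$ it uses the singular-limit sign result of \cite{alzahrani2010}: $\sgn c_{k_1,k_2,1,\delta}=\sgn(k_2^2-k_1)$ for small $\delta$ and $k_1\neq k_2^2$. Through \eqref{signK00} this squeezes $\K(\delta;k_2)$ to $k_2^2$.
\item For $\delta\to\infty$ it uses the exchange symmetry $(U,V,k_1,k_2,\delta)\mapsto\bigl(V(-\sqrt{\delta}\,\cdot),U(-\sqrt{\delta}\,\cdot),k_2,k_1,\delta^{-1}\bigr)$, i.e.\ $\alpha=\K(\delta;\beta)\iff\beta=\K(\delta^{-1};\alpha)$. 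This is combined with the strict increase of $\beta\mapsto\K(\varepsilon;\beta)$, which comes from $\partial_{k_2}c>0$. Then $\K(\varepsilon;\kappa_\varepsilon)=k_2$ with $\kappa_\varepsilon:=\K(\varepsilon^{-1};k_2)\to\Lambda$ forces $\Lambda^2=k_2$.
\end{itemize}

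For the strict bounds, the paper argues in two steps.
\begin{itemize}
\item The positive right-hand sides of \eqref{energyimp1}--\eqref{energyimp2} rule out standing fronts when $k_1\le\sqrt{k_2}$ or $k_1\ge k_2^2$.
\item Continuity of $\delta\mapsto c$ on the connected set $(0,\infty)$ then transfers the signs known at $\delta=1$ to every $\delta$. Those signs come from $c_{k_2,k_2,1,1}=0$ and $\partial_{k_1}c<0$.
\end{itemize}
An energy identity at $c=0$ alone, without this connectedness step, gives no sign. Without these ingredients, the claim that $\K(\cdot;k_2)$ is a bijection onto $(\sqrt{k_2},k_2^2)$ is not established.
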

We mention that the bounds in \eqref{bound1} hold at every standing front. 
We also obtain the following complete sign characterization, which is also contained in \cite[Corollary 1.4]{ma2026}.
\begin{theorem}\label{maintheorem2}
	Let $k_1,k_2>1$ and $d,r>0$. We set $\delta=\frac{d}{r}$. 
	\begin{itemize}
		\item [$(i)$] If $k_1\le\sqrt{k_2}$ then $\c>0$, and if $k_1\ge k_2^{2}$ then $\c<0$, for every $d,r>0$.
		\item[$(ii)$] If $\sqrt{k_2}<k_1<k_2^{2}$, then there exists a unique $\delta^*=\delta^*(k_1,k_2)>0$ such
		that
		\begin{equation}\label{eq6}
			\c\begin{cases}>0,&0<\delta<\delta^*,\\ =0,&\delta=\delta^*,\\ <0,&\delta>\delta^*,\end{cases}
		\end{equation}
		and moreover
		\begin{equation}\label{eq7}
			\sgn(\delta^*-1)=\sgn(k_2-k_1).
		\end{equation}
	\end{itemize}
\end{theorem}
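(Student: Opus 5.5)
We derive Theorem \ref{maintheorem2} from Theorem \ref{maintheorem1}, the monotonicity of $c$ in $k_1$ due to Kan-On \cite{kan1995}, and a scaling reduction. The plan has three steps.

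\textbf{Step 1: reduction to $r=1$.} Substituting $\tilde\xi=\sqrt{r}\,\xi$ and rescaling time turns the system into one with parameters $(k_1,k_2,1,d/r)$. The speed is multiplied by a positive factor, namely $c_{k_1,k_2,r,d}=\sqrt{r}\,c_{k_1,k_2,1,\delta}$ with $\delta=d/r$, after dividing the $u$ equation appropriately (the adaptation of \cite{guo2013} mentioned in the introduction). Only the sign matters here, so it suffices to study $\delta\mapsto c_{k_1,k_2,1,\delta}$.

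\textbf{Step 2: the sign in $k_1$ at fixed $\delta$.} By Kan-On, $k_1\mapsto c_{k_1,k_2,1,\delta}$ is strictly decreasing. By Theorem \ref{maintheorem1}, it vanishes exactly at $k_1=\K(\delta;k_2)$. Hence
\[
\sgn c_{k_1,k_2,1,\delta}=\sgn\bigl(\K(\delta;k_2)-k_1\bigr).
\]
This identity is the key step: it converts every statement into a statement about $\K$.

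\textbf{Step 3: reading off the two parts.}

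For $(i)$: if $k_1\le\sqrt{k_2}$, the strict bound $\K>\sqrt{k_2}$ from \eqref{bound1} gives $\K(\delta;k_2)-k_1>0$, so $c>0$ for every $\delta$. Symmetrically, if $k_1\ge k_2^2$, the bound $\K<k_2^2$ gives $c<0$.

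For $(ii)$: $\K(\cdot;k_2)$ is a continuous, strictly decreasing bijection from $(0,\infty)$ onto $(\sqrt{k_2},k_2^2)$. So for $k_1$ in this interval there is a unique $\delta^*$ with $\K(\delta^*;k_2)=k_1$. Because $\K$ is strictly decreasing, $\K(\delta;k_2)-k_1$ is positive for $\delta<\delta^*$ and negative for $\delta>\delta^*$. With Step 2 this gives \eqref{eq6}.

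For \eqref{eq7}: since $\K(1;k_2)=k_2$ and $\K$ is strictly decreasing,
\[
\sgn(\delta^*-1)=\sgn\bigl(\K(1;k_2)-\K(\delta^*;k_2)\bigr)=\sgn(k_2-k_1).
\]

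\textbf{Main obstacle.} The only delicate point is justifying Step 2 from the cited results. The strict monotonicity in $k_1$ must hold on all of $k_1>1$ with $k_2$, $\delta$ fixed, so that $c$ has at most one zero in $k_1$ and that zero is a genuine sign change. Theorem \ref{maintheorem1}'s equivalence \eqref{zeroarie} supplies existence of the zero; Kan-On's result supplies the sign on each side of it. If needed, continuity of $c$ in $k_1$ comes from the smooth dependence established earlier in the paper.
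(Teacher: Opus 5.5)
Your Step 1 is wrong, and as written it is the only thing that transfers your $r=1$ conclusions to general $(r,d)$. In \eqref{eq1}, $r$ is the ratio of the two growth rates and $d$ the ratio of the two diffusivities. Both are invariant under any rescaling of $\xi$ or of time, so no such rescaling takes $(k_1,k_2,r,d)$ to $(k_1,k_2,1,d/r)$.

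Concretely, $\tilde\xi=\sqrt r\,\xi$ turns the $U$-equation into $r\tilde U''+\sqrt r\,c\,\tilde U'+\tilde U(1-\tilde U-k_1\tilde V)=0$. ``Dividing the $u$ equation'' then only moves the factor $r$ into its reaction term. Equivalently, dividing the $V$-equation of \eqref{eq1} by $r$ gives $\delta V''+(c/r)V'+V(1-V-k_2U)=0$, while the $U$-equation keeps $cU'$. So the profile problems for $(r,d)$ and $(1,d/r)$ coincide only when $c=0$. This is why the paper transfers only the zero set (through \eqref{standing0}) and never the speed.

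The identity $c_{k_1,k_2,r,d}=\sqrt r\,c_{k_1,k_2,1,\delta}$ therefore has no justification and does not hold in general. Even the sign equality $\sgn c_{k_1,k_2,r,d}=\sgn c_{k_1,k_2,1,d/r}$, which is all you actually need, does not follow from a change of variables. It needs the strict monotonicity in $k_1$ together with the fact that standing fronts depend only on $d/r$.

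The repair is immediate and is exactly the paper's route: drop Step 1 and run Step 2 at an arbitrary fixed $(k_2,r,d)$. Kan-On's strict decrease in $k_1$ holds for every fixed $(k_2,r,d)$. Moreover, \eqref{zeroarie} already locates the unique zero at $k_1=\K(d/r;k_2)$ for all $r,d>0$. Hence $\sgn c_{k_1,k_2,r,d}=\sgn\bigl(\K(d/r;k_2)-k_1\bigr)$, which is \eqref{signK00} of Proposition~\ref{propthreshold} and is what the paper's proof cites directly. With this in place, your Step 3 for $(i)$ and for \eqref{eq6} is the paper's argument.

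For \eqref{eq7} you take a different but valid route. You use $\K(1;k_2)=k_2$ and the strict decrease of $\K$. The paper instead applies the sign relation \eqref{signrel} of Corollary~\ref{corollarysign}, which comes from the energy identity \eqref{imp2}, to the standing front at $\delta^*$. Yours is shorter once Theorem~\ref{maintheorem1} is available; the paper's does not need the monotonicity of $\K$.
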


Combining Theorems \ref{maintheorem} and \ref{maintheorem2}, we obtain the following result.
\begin{corollary}\label{corollarymonotonegeneral}
	Let $k_1,k_2>1$ and $r>0$. 
		\begin{itemize}
		\item[$(i)$] If $k_1\geq k_2^2$, then the function $d\mapsto c_{k_1,k_2,r,d}$ is strictly decreasing on $[1,\infty)$.
		\item[$(ii)$] Assume that $\sqrt{k_2}<k_1<k_2^2$ and let $d^*:=r\delta^*(k_1,k_2)$.		If $d^* \geq 1$, then the function $d\mapsto c_{k_1,k_2,r,d}$ is strictly decreasing on $[d^*,\infty)$. If $d^*<1$, then it is strictly decreasing on $[1,\infty)$ and
		\begin{equation*}
			\left.\partial_dc_{k_1,k_2,r,d}\right|_{d=d^*}<0.
		\end{equation*}
	\end{itemize}
\end{corollary}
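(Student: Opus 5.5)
The corollary follows by combining the sign information of Theorem \ref{maintheorem2} with the local criterion \eqref{condition0} of Theorem \ref{maintheorem}. On an interval $I$ of $d$-values we only need to check two things: $\c\le 0$, and $\c(1-1/d)\le 0$. Then $\partial_d\c<0$ holds at every point of $I$. Since the speed is smooth in $d$ (established before Theorem \ref{maintheorem}), the derivative is continuous, so strict negativity on $I$ gives strict decrease on $I$, including at the endpoints of a closed interval.

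\emph{Case (i), $k_1\ge k_2^2$.} Theorem \ref{maintheorem2}(i) gives $\c<0$ for every $d,r>0$. For $d\ge1$ we have $1-1/d\ge0$, hence $\c(1-1/d)\le0$. Thus \eqref{condition0} holds at every $d\in[1,\infty)$, and Theorem \ref{maintheorem} gives $\partial_d\c<0$ there.

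\emph{Case (ii), $\sqrt{k_2}<k_1<k_2^2$.} Put $d^*=r\delta^*$, so that $\delta=d/r\ge\delta^*$ exactly when $d\ge d^*$. By \eqref{eq6}, $\c\le0$ for $d\ge d^*$, with equality only at $d=d^*$.
\begin{itemize}
\item If $d^*\ge1$, every $d\ge d^*$ also satisfies $d\ge1$. Hence $1-1/d\ge0$ and $\c(1-1/d)\le0$ on $[d^*,\infty)$, and Theorem \ref{maintheorem} applies throughout.
\item If $d^*<1$, then on $[1,\infty)$ we have $d>d^*$, so $\c<0$, together with $1-1/d\ge0$. Again \eqref{condition0} holds, giving strict decrease on $[1,\infty)$.
\item At $d=d^*$ itself, $\c=0$. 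Both inequalities in \eqref{condition0} are then trivially satisfied whatever the sign of $1-1/d^*$. This is the standing-front clause of Theorem \ref{maintheorem}, and it yields $\partial_d c|_{d=d^*}<0$.
\end{itemize}

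There is no real obstacle; the argument is bookkeeping. The one point needing care is translating between $\delta=d/r$ and $d$ in \eqref{eq6}: with $r$ fixed, $\delta>\delta^*$ is equivalent to $d>d^*$. We must also note that on the intervals between $d^*$ and $1$ (when $d^*<1$) the sign condition genuinely fails. There $\c<0$ and $1-1/d<0$, so the product is positive, which is why only the pointwise statement at $d^*$ is claimed in that case.
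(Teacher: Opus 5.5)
Your proposal is correct and is essentially the paper's own argument. You use Theorem \ref{maintheorem2} to get $c_{k_1,k_2,r,d}<0$ on the relevant range $d\ge 1$ (everywhere when $k_1\ge k_2^2$, beyond $d^*$ otherwise), check \eqref{condition0} there, and invoke the standing-front case of Theorem \ref{maintheorem} at $d=d^*$, exactly as the paper does; continuity of $\partial_d c_{k_1,k_2,r,d}$ is not actually needed, since a negative derivative at every point of the interval already gives strict decrease by the mean value theorem.
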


The rest of the paper is organized as follows. In Section \ref{speedsign}, we study the properties of the linearized operator associated to the system \eqref{eq1}-\eqref{eq2} and recall the standing front identities that we use in the sequel.  We prove our main results in Section \ref{monotonicity}. 

\subsubsection*{Notations}Throughout the paper, we use the standard Sobolev space notations. We write $L^2(\R)^2:=L^2(\R;\R^2)$ and $H^2(\R)^2:=H^2(\R;\R^2)$. We denote by $C_b(\R)^2:=C_b(\R;\R^2)$ the space of bounded continuous functions from $\R$ to $\R^2$, and by $\BUC:=\BUC(\R;\R^2)$ the space of bounded uniformly continuous functions from $\R$ to $\R^2$.  For $z\in\R^2$, $\|z\|$ denotes the Euclidean norm in $\R^2$. For $f,g\in L^2(\R)^2$, we denote their $L^2$-inner product by $\langle f,g\rangle_{L^2}$.  For a linear operator $T$, we write $\ker T$ and $\operatorname{Ran}T$ for its kernel and range, respectively and $\operatorname{span}\{h\}$ for the linear space generated by $h$.  $D\F(\Phi)$ denotes the Jacobian matrix of $\F$ evaluated at $\Phi$. The symbol $'$ will be used for the derivative with respect to the variable $\xi$ and $\partial_{a}$ for the partial derivative with respect to the parameter $a$.  Finally, we set $\pi:=(k_1,k_2,r,d)\in Q:=(1,\infty)^2\times(0,\infty)^2$.


\section{Properties of the front, linearization and the sign of the wave speed}\label{speedsign}
We recall in this section some known results (see e.g., \cite{kan1995,kan1996,risler2017}) and we study the properties of the linearized operator associated to the system \eqref{eq1}-\eqref{eq2}. 
We write  \eqref{eq1}-\eqref{eq2} on a compact form as follows. Let $\Phi:=(U,V)^T$, then \eqref{eq1}-\eqref{eq2} becomes
\begin{equation}\label{compactform}
	\D_d \Phi''+c \Phi'+\F_{k_1,k_2,r}(\Phi)=0, \qquad \Phi(-\infty) = E_- := \begin{pmatrix}1\\0\end{pmatrix}, \quad \Phi(+\infty) = E_+ := \begin{pmatrix}0\\1\end{pmatrix}, 
\end{equation}
where
\begin{equation}\label{compactform1}
	\D_d = \begin{pmatrix}1&0\\0&d\end{pmatrix}
	\qquad\text{and}\qquad
	\F_{k_1,k_2,r}(\Phi) = \begin{pmatrix} U(1-U-k_1V) \\ rV(1-V-k_2U)\end{pmatrix}.
\end{equation}
We also introduce the linearized operator of \eqref{compactform} at $\Phi$ defined by
\begin{equation}\label{linearizedoperator}
\L: H^2(\R)^2 \to L^2(\R)^2, \qquad \L(h):=\D_dh''+ch'+D\F_{k_1,k_2,r}(\Phi(\cdot))h. 
\end{equation}
Note that for $\Phi=(U,V)^T$ such that $0<U,V<1$, $\L$ is a well defined. The adjoint of the $L^2$-unbounded realization of $\L$ is defined as 
\begin{equation}\label{linearizedoperator}
	\L^*: H^2(\R)^2 \to L^2(\R)^2, \qquad \L^*(h):=\D_dh''-ch'+D\F_{k_1,k_2,r}(\Phi(\cdot))^Th. 
\end{equation}
We observe that a differentiation of \eqref{compactform} gives $\L (\Phi')=0$.

The proof of the following result follow closely the arguments contained in  \cite[Lemma 2.2]{kenne2026}. We omit it for brevity. 

\begin{lemma}\label{lemdecay} Let $\pi_0=(k_{1,0}, k_{2,0},r_0,d_0) \in Q$ and let $(\Phi_0, c_0)$ be its monotone front. There exist $C,\eta>0$ such that
	\begin{equation}\label{cy0}
		\|\Phi_0(\xi)-E_-\| + \|\Phi_0'(\xi)\|  \leq C e^{\eta\xi}, \qquad \xi\leq 0
	\end{equation}
	and 
	\begin{equation}\label{cy1}
		\|\Phi_0(\xi)-E_+\| + \|\Phi_0'(\xi)\|  \leq C e^{-\eta\xi}, \qquad \xi\geq 0.
	\end{equation}
	In particular,
	\begin{equation}\label{cy2}
		\Phi'_0\in H^2(\R)^2, \qquad \Phi_0'' \in L^2(\R)^2, \qquad \F_{k_{1,0}, k_{2,0},r_0}(\Phi_0)\in L^2(\R)^2.
	\end{equation}
Moreover, for every integer $m\geq 1$, there exists $C_m>0$ such that
	\begin{equation}\label{generaldecay}
		\|\Phi_0^{(m)}(\xi)\|\leq C_me^{-\eta|\xi|}, \qquad \text{ for all } \xi\in \R.
	\end{equation}
\end{lemma}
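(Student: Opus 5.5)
We prove Lemma \ref{lemdecay} by linearizing at the two endpoint equilibria and using stable/unstable manifold theory for the first-order system associated with \eqref{compactform}. Set $Y=(U,U',V,V')\in\R^4$. Then \eqref{eq1} becomes an autonomous system
\begin{equation*}
Y'=G(Y),\qquad G(Y)=\bigl(U',\,-cU'-U(1-U-k_1V),\,V',\,-d^{-1}(cV'+rV(1-V-k_2U))\bigr),
\end{equation*}
with $c=c_0$ and parameters $\pi_0$. The front is a heteroclinic orbit from $\widehat E_-=(1,0,0,0)$ to $\widehat E_+=(0,0,1,0)$.

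The first step is to show that both endpoints are hyperbolic equilibria. At $\widehat E_-$ the Jacobian of $\F$ is upper triangular, with diagonal entries $-1$ (from the $U$-equation) and $r(1-k_2)<0$ (from the $V$-equation, because $k_2>1$). The characteristic equations therefore decouple into
\begin{equation*}
\lambda^2+c\lambda-1=0\qquad\text{and}\qquad d\lambda^2+c\lambda+r(1-k_2)=0 .
\end{equation*}
Each has roots of opposite signs, since the product of the roots is negative. Hence no eigenvalue lies on the imaginary axis. The same argument applies at $\widehat E_+$, where the diagonal entries are $1-k_1<0$ and $-r$. Let $\eta_0>0$ be the smallest absolute value of the real parts of all these eigenvalues at either endpoint.

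The second step gives the decay estimates. Since $Y(\xi)\to\widehat E_-$ as $\xi\to-\infty$, the orbit lies in the unstable manifold of $\widehat E_-$ for $\xi$ sufficiently negative. The unstable manifold theorem, or equivalently a variation-of-constants argument on the linearization with the quadratic remainder absorbed, yields $\|Y(\xi)-\widehat E_-\|\le Ce^{\eta\xi}$ for $\xi\le\xi_0$, for any $0<\eta<\eta_0$. This estimate bounds both $\|\Phi_0-E_-\|$ and $\|\Phi_0'\|$. On the compact interval $[\xi_0,0]$ the quantities are bounded, so after enlarging $C$ we obtain \eqref{cy0}. The estimate \eqref{cy1} follows symmetrically from the stable manifold at $\widehat E_+$.

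The main (mild) obstacle is to make the manifold argument rigorous without assuming a spectral gap. This is exactly why hyperbolicity is checked explicitly in the first step. A simpler alternative is available: since $\Phi_0-E_\pm$ and $\Phi_0'$ tend to zero, one can write the ODE as a linear system with an asymptotically constant hyperbolic coefficient matrix and apply a standard exponential dichotomy or Gronwall-type lemma.

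The last step handles higher derivatives. Differentiating \eqref{eq1} gives
\begin{equation*}
\Phi_0''=-\D_{d_0}^{-1}\bigl(c_0\Phi_0'+\F(\Phi_0)\bigr).
\end{equation*}
Since $\F(E_\pm)=0$ and $\F$ is smooth and Lipschitz on $[0,1]^2$, we have $\|\F(\Phi_0)\|\le L\|\Phi_0-E_\pm\|$. Therefore $\Phi_0''$ decays like $e^{-\eta|\xi|}$.

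We then argue by induction. Differentiating the ODE $m-2$ times expresses $\Phi_0^{(m)}$ as a polynomial in $\Phi_0,\dots,\Phi_0^{(m-1)}$ in which every term contains at least one factor $\Phi_0^{(j)}$ with $j\ge1$, or the factor $\F(\Phi_0)$. This gives \eqref{generaldecay}.

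Finally, exponential decay of $\Phi_0',\Phi_0'',\Phi_0'''$ implies $\Phi_0'\in H^2(\R)^2$ and $\Phi_0''\in L^2(\R)^2$. The bound $\|\F(\Phi_0)\|\le L\|\Phi_0-E_\pm\|$ together with \eqref{cy0}--\eqref{cy1} gives $\F(\Phi_0)\in L^2(\R)^2$, which proves \eqref{cy2}.
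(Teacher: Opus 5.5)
Your proposal is correct and is essentially the standard argument; the paper omits its own proof, deferring to \cite[Lemma 2.2]{kenne2026}, and your route (hyperbolic endpoints, decay along the unstable/stable manifolds, then the equation to control higher derivatives) matches the exponential-dichotomy-plus-equation reasoning it uses for the adjoint eigenfunction in Proposition \ref{propfredholm}. The one step you take for granted is the phase-space convergence $Y(\xi)\to\widehat E_\pm$, i.e.\ $\Phi_0'(\pm\infty)=0$, which is not part of \eqref{eq2}; it follows because $\Phi_0'$ is integrable ($\int_\R|U'|\,d\xi=\int_\R|V'|\,d\xi=1$ by \eqref{eq2a}) and uniformly continuous ($\Phi_0'$ is bounded by a mean-value/Gronwall argument on unit intervals, so $\Phi_0''$ is bounded by the equation).
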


 We have the following result. 

\begin{proposition}\label{propfredholm}
Let $k_1,k_2>1$ and $r,d>0$. Then the operator $\L$ is Fredholm of index $0$.
Moreover,
\begin{equation}\label{kerL}
\operatorname{ker}\L=\operatorname{span}\left\{\Phi'\right\}, \qquad\operatorname{codim} \operatorname{Ran} \mathcal{L}=1
\end{equation}
and there exists  $\Psi:=(-a,b)^T\in H^2(\R)^2$ with $a,b>0$ and $(a,b)(\pm \infty)=0$ such that
\begin{equation}\label{kerL*}
	\operatorname{ker}\L^*=\operatorname{span}\left\{ \Psi\right\}.
\end{equation}

\end{proposition}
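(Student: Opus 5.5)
The proof has four steps: Fredholm property via exponential dichotomies, a one-dimensional kernel via a sliding/comparison argument, index zero to get the adjoint kernel, and a sign argument for the adjoint eigenfunction after a change of variables.

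\textbf{Fredholm property and index.} Since $\Phi(\xi)\to E_\pm$ as $\xi\to\pm\infty$ exponentially (Lemma \ref{lemdecay}), I rewrite $\L h=f$ as a first-order system in $\R^4$ and apply Palmer's theory of exponential dichotomies, i.e. the standard Fredholm theory for asymptotically constant operators. The limiting operators are
\[
\L_\pm h=\D_dh''+ch'+D\F(E_\pm)h.
\]
At $E_-=(1,0)$ the matrix $D\F(E_-)$ is upper triangular with diagonal entries $-1$ and $r(1-k_2)<0$. At $E_+$ it is lower triangular with diagonal entries $1-k_1<0$ and $-r$. For $\lambda=i\omega$, the symbol $-\omega^2\D_d+ic\omega+D\F(E_\pm)$ is triangular with diagonal entries having strictly negative real part, so it is invertible. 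The limiting operators are therefore hyperbolic and $\L$ is Fredholm. The index equals the difference of unstable dimensions at $\pm\infty$. Each limiting $4\times4$ first-order system has two eigenvalues of each sign: for each scalar equation $d_i\mu^2+c\mu-\alpha=0$ with $\alpha>0$, the roots have opposite signs. The Morse indices agree, so the index is $0$.

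\textbf{Kernel of $\L$.} We have $\Phi'\in\ker\L$, and $\Phi'\in H^2$ by Lemma \ref{lemdecay}. Set $\tilde h=(-h_1,h_2)$. In these variables the system is cooperative: the off-diagonal entries $-k_1U$ and $-rk_2V$ become positive. The function $\tilde\Phi'=(p,q)$ is then strictly positive. Given $h\in\ker\L$, the component $\tilde h$ decays exponentially, with rate controlled by the dichotomy. I use a sliding argument: let
\[
\tau^*=\inf\{\tau:\ \tau\tilde\Phi'-\tilde h\ge0\}.
\]
Comparing tails at $\pm\infty$ via the slowest decaying modes gives $\tau^*<\infty$. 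The strong maximum principle for cooperative elliptic systems, with irreducibility coming from $k_1U>0$ and $k_2V>0$, forces $\tau^*\tilde\Phi'\equiv\tilde h$ or produces a contradiction. This is the step I expect to be the \textbf{main obstacle}: controlling the tails requires that $\tilde\Phi'$ decays at the slowest rate among kernel elements. I would handle this as Kan-On does, by checking that $\Phi'$ is tangent to the principal (slowest) eigendirection, which is a positive eigenvector by Perron–Frobenius applied to the cooperative linearization. Alternatively, I would argue on a large compact interval and use the negativity of $D\F(E_\pm)$ in the tails to apply the maximum principle there. This gives $\ker\L=\operatorname{span}\{\Phi'\}$. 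Index zero then yields $\operatorname{codim}\operatorname{Ran}\L=1$ and $\dim\ker\L^*=1$, since $\operatorname{Ran}\L$ is closed with annihilator $\ker\L^*$.

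\textbf{Sign of the adjoint eigenfunction.} Let $\Psi\in\ker\L^*$ be nonzero. The operator $\L^*$ is asymptotically hyperbolic with the same limiting matrices transposed, so $\Psi$ and its derivatives decay exponentially, and hence $(a,b)(\pm\infty)=0$. After the same sign change, $\tilde\Psi=(-\Psi_1,\Psi_2)$ satisfies a cooperative irreducible system, because the transpose keeps the off-diagonal entries of the same sign. I would show that $\tilde\Psi$ has constant sign by running the same sliding argument for $\L^*$: its principal eigenvalue is $0$, since it is the adjoint of $\L$, which has the positive kernel element $\tilde\Phi'$. Concretely, if $\tilde\Psi$ changed sign, I would compare it with the positive part to contradict one-dimensionality, or use the Krein–Rutman characterization on truncated domains and pass to the limit. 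The strong maximum principle then gives strict positivity of both components after normalization. This yields $\Psi=(-a,b)^T$ with $a,b>0$.
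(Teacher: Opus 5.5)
\textbf{Steps 1 to 3 are sound; the gap is in Step 4, the sign of the adjoint eigenfunction.}

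Your Fredholm and index computation is correct: the limiting matrices are triangular, so there are no imaginary spatial eigenvalues, and there are two stable and two unstable ones at each end. It is in fact more explicit than the paper, which omits this part. Your kernel argument also works in its second variant. Since $\B_\pm$ are Metzler with negative eigenvalues, there are vectors $e_\pm>0$ with $\B(\xi)e_\pm<0$ on the tails $\pm\xi\ge N$. This gives a maximum principle on the tails, so sliding $\tau\tilde\Phi'-\tilde h$ only has to be controlled on $[-N,N]$.

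The sign of $\Psi$ is the real content of \eqref{kerL*}, and none of your three routes establishes it.
\begin{itemize}
\item ``Running the same sliding argument for $\L^*$'' is circular. Sliding needs a known positive solution of $\tilde\L^*z=0$ to compare against, and producing one is exactly what must be shown.
\item Asserting that the principal eigenvalue of $\L^*$ is $0$ does not help. On $\R$ the operator is non-self-adjoint with essential spectrum, Krein--Rutman is unavailable, and nothing yields a positive \emph{decaying} eigenfunction.
\item ``Compare with the positive part,'' as stated, does not explain why $\tilde\Psi^+$ would again lie in $\ker\tilde\L^*$. There is no Rayleigh-quotient characterization, because $\tilde\L^*$ is not symmetric.
\item The truncated-domain route is only named. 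You would still need to show that mass does not escape to infinity and that the limiting eigenvalue is $0$.
\end{itemize}
The paper avoids all of this by citing Volpert--Volpert--Volpert [Ch.~4, Thm.~5.1], which directly gives a positive solution of the adjoint equation vanishing at $\pm\infty$.

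\textbf{How to close the gap.} The missing idea is duality with the positive kernel element of $\tilde\L$, and it makes your positive-part suggestion rigorous.
\begin{itemize}
\item Let $z=\tilde\Psi$ and $z^+=(z_1^+,z_2^+)$. Apply Kato's inequality componentwise, and use that the off-diagonal entries $rk_2V$ and $k_1U$ of $\B^T$ are nonnegative. This gives $\mu:=\tilde\L^*z^+\ge 0$ in $\mathcal{D}'(\R)$.
\item Test against $\chi_R\tilde\Phi'$ with cutoffs $\chi_R\uparrow 1$ and let $R\to\infty$; everything decays exponentially. This gives $\int \tilde\Phi'\cdot d\mu=\langle z^+,\tilde\L\tilde\Phi'\rangle_{L^2}=0$.
\item Since $\tilde\Phi'>0$, this forces $\mu=0$, so $z^+\in\ker\tilde\L^*$.
\item One-dimensionality then gives $z^+\in\operatorname{span}\{z\}$, which is only possible if $z$ has one sign.
\item The strong maximum principle for the irreducible cooperative system then yields $a,b>0$.
\end{itemize}
Either add this argument or cite the Volpert theorem as the paper does.
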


\begin{proof}
The Fredholm property is sketched in \cite{kenne2026}, so we omit it here.
The first equality in \eqref{kerL} is contained in  \cite[Lemma 3.3]{kan1996} for the case of the operator $\L$ defined on the space $X=\BUC$. Here, the operator $\L: H^2(\R)^2 \to L^2(\R)^2$ and we can adapt their results (see also \cite{risler2017}).  However, we will provide a complete proof for clarity and for further needs. 
Let $h=(h_1,h_2)^T\in H^2(\R)^2$. Then $\L h=0$ is equivalent to 
\begin{equation}\label{syst1}
\left\{ 
	\begin{array}{llllll}
	h_1''+ch_1'+(1-2U-k_1V)h_1-k_1Uh_2=0,\\
	dh_2''+ch_2'+r(1-2V-k_2U)h_2-rk_2Vh_1=0.
	\end{array} 
	\right.
\end{equation}
We transform the system \eqref{syst1} to a cooperative system by letting $w_1=-h_1$ and $w_2=h_2$. We thus obtain 
\begin{equation}\label{syst2}
	\left\{ 
	\begin{array}{llllll}
		w_1''+cw_1'+(1-2U-k_1V)w_1+k_1Uw_2=0,\\
		dw_2''+cw_2'+r(1-2V-k_2U)w_2+rk_2Vw_1=0.
	\end{array} 
	\right.
\end{equation}
Note that because $\L(\Phi')=0$, we have from \eqref{eq2a} that $w=(-U',V')^T$ is a positive solution to \eqref{syst2}. In addition, we obtain from Lemma \ref{lemdecay} that $\dis \lim_{\xi\to \pm\infty}w(\xi)=0$. Next, we can write the system \eqref{syst2} on the form
\begin{equation}\label{compact00}
\tilde{\L}w:=\D_dw''+\C w'+\B(\xi)w=0.
\end{equation} 
where $\D_d$ is defined in \eqref{compactform1}, $\C= \begin{pmatrix}c&0\\0&c\end{pmatrix}$ and $\B(\xi)=\begin{pmatrix}
	1-2U(\xi)-k_1V(\xi) & k_1U(\xi) \\
	rk_2V(\xi)& r(1-2V(\xi)-k_2U(\xi))
\end{pmatrix}.$ 
Then the limiting matrices $\dis \B_{\pm}:=\lim_{\xi\to \pm\infty}\B(\xi)$ have each two negative eigenvalues, so they have negative principal eigenvalues.  Finally, $\B(\xi)$ is irreductible in the functional sense because $k_1U(\xi)>0$ and $rk_2V(\xi)>0$. We apply \cite[Chapter 4, Theorem 5.1]{volpert1994} to deduce that $\operatorname{ker} \tilde{\L}=\operatorname{span}\{w\}$. Therefore, $\operatorname{ker}\L=\operatorname{span}\left\{\Phi'\right\}.$ Moreover, the adjoint equation $\tilde{\L}^*\tilde{w}=0$, with $\tilde{w}(\pm \infty)=0$ has a unique positive solution up to a constant factor. Hence, there exists $a,b>0$ with  $(a,b)(\pm \infty)=0$,  such that $\operatorname{ker}\tilde{\L}^*=\operatorname{span}\left\{ \begin{pmatrix} a\\b\end{pmatrix} \right\}$.  This implies that $\operatorname{ker}\L^*=\operatorname{span}\left\{ \begin{pmatrix} -a\\b\end{pmatrix} \right\}$. The exponential dichotomies of the limiting adjoint systems gives exponential decay of this solution and of its first derivative. Using the adjoint differential equation, we can deduce the exponential decay of its second derivative. Hence, $(a,b)\in H^2(\R)^2$. 
\end{proof}

\begin{remark}\label{remark1} The following observations are in order.
We note that since $\L$ is Fredholm, $\operatorname{Ran} \L$ is closed. Then we can write the orthogonal decomposition $L^2(\mathbb{R})^2=\operatorname{Ran} \L \oplus(\operatorname{Ran}\L)^{\perp}$. Next, as $\operatorname{ker} \L^*=(\operatorname{Ran} \L)^{\perp}$, we can deduce from  \eqref{kerL*} that 
\begin{equation}\label{ranL}
\operatorname{Ran}\L =\left\{f\in L^2(\R)^2: \langle f,\Psi\rangle_{L^2}=0\right\}, \quad \Psi=(-a,b)^T, \quad a,b>0.
\end{equation}
Moreover, 
\begin{equation}\label{imp0}
\langle \Psi, \Phi'\rangle_{L^2}=\int_{\R}(-a(\xi)U'(\xi)+b(\xi)V'(\xi))d\xi>0. 
\end{equation}
This implies that $\Phi'\notin \operatorname{Ran}\L$.  

We also mention that a similar result for the adjoint operator is obtained in \cite[Lemma A. 2.]{kan1993} when the speed $c=0$. We believe that one could also adapt their proof for $c\neq0$. 
\end{remark}

Next, we state the following smoothness result. 

\begin{theorem}\label{theosmooth}
	Let $\pi \in Q$ and let $c_{\pi}$ be the speed of the unique monotone front
	$\Phi_{\pi}=(U_{\pi},V_{\pi})^T$ of \eqref{eq1}-\eqref{eq2} associated with
	$\pi$ and normalized by $U_{\pi}(0)=\frac{1}{2}$. Then the speed map
	\begin{equation*}
	\pi \longmapsto c_{\pi}
	\end{equation*}
	is of class $C^\infty$ on $Q$. Moreover, for every
	$\pi_0\in Q$, the map
	\begin{equation*}
		\pi\longmapsto
		\Phi_{\pi}-\Phi_{\pi_0}\in H^2(\R)^2
	\end{equation*}
	is of class $C^\infty$ on $Q$. 
\end{theorem}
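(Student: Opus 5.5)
We prove Theorem \ref{theosmooth} with the implicit function theorem, applied to a map built on the normalized front. Fix $\pi_0\in Q$ and write $(\Phi_0,c_0)$ for its normalized front. We look for the front at a nearby $\pi$ in the form $\Phi=\Phi_0+h$ with $h\in H^2(\R)^2$. Since $\Phi_0$ has the prescribed limits $E_\pm$ (Lemma \ref{lemdecay}), the condition $h\in H^2(\R)^2$ automatically keeps the limits $E_\pm$. We also impose the phase condition $h_1(0)=0$, which encodes the normalization $U(0)=\frac12$. Define
\begin{equation*}
G:H^2(\R)^2\times\R\times Q\to L^2(\R)^2\times\R,\qquad G(h,c,\pi):=\Bigl(\D_d(\Phi_0''+h'')+c(\Phi_0'+h')+\F_{k_1,k_2,r}(\Phi_0+h),\; h_1(0)\Bigr).
\end{equation*}

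\textbf{Step 1: $G$ is well defined and $C^\infty$.} The first component splits as
\begin{equation*}
\D_d\Phi_0''+c\Phi_0'+\F_{k_1,k_2,r}(\Phi_0)+\D_dh''+ch'+\bigl[\F_{k_1,k_2,r}(\Phi_0+h)-\F_{k_1,k_2,r}(\Phi_0)\bigr].
\end{equation*}
\begin{itemize}
\item By \eqref{cy2}, the terms $\Phi_0''$, $\Phi_0'$ and $\F_{\pi}(\Phi_0)$ all lie in $L^2(\R)^2$. The last one holds for every $\pi$, not only $\pi_0$, because $\F_\pi(E_\pm)=0$ and $\Phi_0$ converges exponentially to $E_\pm$.
\item The bracket is a polynomial of degree two in $h$ with coefficients bounded in $\xi$. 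Since $H^2(\R)\hookrightarrow L^\infty(\R)$ and $H^2(\R)$ is an algebra, this is a smooth (polynomial) Nemytskii map $H^2\to L^2$.
\item All terms depend polynomially on $(c,\pi)$.
\item Evaluation at $0$ is a bounded linear functional on $H^2(\R)$.
\end{itemize}
Hence $G$ is $C^\infty$, and $G(0,c_0,\pi_0)=0$.

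\textbf{Step 2: the partial derivative is an isomorphism.} The derivative with respect to $(h,c)$ at $(0,c_0,\pi_0)$ is
\begin{equation*}
T(k,\gamma)=\bigl(\L k+\gamma\Phi_0',\;k_1(0)\bigr),
\end{equation*}
where $\L$ is the linearized operator at $(\Phi_0,c_0,\pi_0)$. We check bijectivity using Proposition \ref{propfredholm} and Remark \ref{remark1}.
\begin{itemize}
\item \emph{Injectivity.} Suppose $T(k,\gamma)=0$. Pairing $\L k+\gamma\Phi_0'=0$ with $\Psi$ gives $\gamma\langle\Phi_0',\Psi\rangle=0$, since $\langle\L k,\Psi\rangle=0$. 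By \eqref{imp0} this forces $\gamma=0$. Then $k\in\ker\L=\operatorname{span}\{\Phi_0'\}$, say $k=s\Phi_0'$. The condition $k_1(0)=0$ becomes $sU_0'(0)=0$, and $U_0'(0)<0$ gives $s=0$.
\item \emph{Surjectivity.} Given $(f,\beta)$, choose $\gamma$ with $\langle f-\gamma\Phi_0',\Psi\rangle=0$; this is possible by \eqref{imp0}. Then $f-\gamma\Phi_0'\in\operatorname{Ran}\L$ by \eqref{ranL}, so we can solve $\L k=f-\gamma\Phi_0'$. Finally add a multiple of $\Phi_0'$ to $k$ to adjust $k_1(0)$ to $\beta$.
\end{itemize}
By the open mapping theorem $T$ is an isomorphism. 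The implicit function theorem then yields a $C^\infty$ map $\pi\mapsto(h(\pi),c(\pi))$ on a neighborhood of $\pi_0$ with $G(h(\pi),c(\pi),\pi)=0$.

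\textbf{Step 3 (main obstacle): the IFT solution is the monotone front.} We must show $(\Phi_0+h(\pi),c(\pi))$ is the unique normalized monotone front, so that $c(\pi)=c_\pi$ and $h(\pi)=\Phi_\pi-\Phi_0$. The IFT only produces a solution of \eqref{compactform} with the right limits; it does not give monotonicity. We close this gap by a continuity argument.
\begin{itemize}
\item \emph{First attempt: transfer monotonicity.} As $\pi\to\pi_0$, $h\to0$ in $H^2$, hence in $C^1$. One would like to conclude $U'<0<V'$ for $\pi$ near $\pi_0$. Near $\pm\infty$ this needs a linearization argument: the tails of $U'$, $V'$ are governed by the stable and unstable manifolds of $E_\pm$, whose leading eigenvectors have fixed signs, and these depend continuously on $\pi$. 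Making this uniform is delicate.
\item \emph{Better route: compare with the known front.} By the classical existence theory \cite{kan1995,kan1996}, the monotone front $(\Phi_\pi,c_\pi)$ exists for every $\pi$. Continuity of $\pi\mapsto(\Phi_\pi-\Phi_0,c_\pi)$ in $H^2\times\R$ should follow from uniqueness plus compactness: uniform exponential decay (Lemma \ref{lemdecay} with locally uniform constants) together with the normalization $U_\pi(0)=\frac12$ excludes loss of mass at infinity, so limits of fronts are fronts. Once continuity holds, $(\Phi_\pi-\Phi_0,c_\pi)$ lies in the neighborhood where the IFT solution is locally unique. It therefore coincides with $(h(\pi),c(\pi))$, and the IFT solution inherits monotonicity for free.
\end{itemize}
Since $\pi_0\in Q$ was arbitrary, smoothness holds on all of $Q$. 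For the second claim with a fixed base point $\pi_0$ and $\pi$ near some other $\pi_1$, write $\Phi_\pi-\Phi_{\pi_0}=(\Phi_\pi-\Phi_{\pi_1})+(\Phi_{\pi_1}-\Phi_{\pi_0})$. The first term is smooth near $\pi_1$ by the local result, and the second is a constant element of $H^2(\R)^2$, because both fronts share the limits $E_\pm$ and converge to them exponentially.
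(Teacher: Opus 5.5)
Your Steps 1 and 2 are correct, and your closing decomposition $\Phi_\pi-\Phi_{\pi_0}=(\Phi_\pi-\Phi_{\pi_1})+(\Phi_{\pi_1}-\Phi_{\pi_0})$ is exactly the paper's proof. For the local part the paper only refers to \cite[Theorem 3.2]{kenne2026}, so your bordered implicit-function setup is a reasonable reconstruction of it. The genuine gap is Step 3, which is where the content of the theorem lies. A priori, the IFT branch is just \emph{some} solution of \eqref{compactform} with $h_1(0)=0$.

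Your ``better route'' needs continuity of $\pi\mapsto(\Phi_\pi-\Phi_0,c_\pi)$ in $H^2(\R)^2\times\R$, and you only say this ``should follow''. Its key input, Lemma \ref{lemdecay} with constants locally uniform in $\pi$, is not what the lemma gives: there $C,\eta$ depend on the single front. Proving that uniformity is essentially the whole difficulty. The compactness argument would also need a priori bounds on $c_\pi$, and it would need to exclude limit profiles connecting to another equilibrium, such as $(0,0)$ or the coexistence state. None of this is supplied, so as written Step 3 is not a proof.

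The route you discarded is the short one, because the IFT already gives $h(\pi)\to0$ in $H^2$, hence $h,h'\to0$ uniformly on $\R$. Choose $M$ (depending only on $\pi_0$) so that $\Phi_0$ is close to $E_\pm$ for $\pm\xi\ge M$, then shrink the neighbourhood of $\pi_0$.

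\emph{Positivity of $U,V$.} $U$ solves the linear equation $U''+cU'+\alpha U=0$ with $\alpha=1-U-k_1V<0$ on $[M,\infty)$, $U(M)>0$ and $U(+\infty)=0$. The maximum principle gives $U>0$ there, while $U\ge U_0(M)-\|h_1\|_\infty>0$ on $(-\infty,M]$. Symmetrically $V>0$. Hence $w=(-U',V')^T$ solves the cooperative system \eqref{compact00}.

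\emph{Monotonicity.} Take $e_+=(1,2k_2)^T$ and $e_-=(2k_1,1)^T$ (computed at $\pi_0$). Then $\B_\pm e_\pm<0$ componentwise, so $\B(\xi)e_\pm<0$ for $\pm\xi\ge M$. Moreover $w>0$ on $[-M,M]$ by uniform closeness to $w_0$, and $w\to0$ at $\pm\infty$. Suppose $w$ had a negative component on $[M,\infty)$, and take the smallest $\tau>0$ with $z=w+\tau e_+\ge0$ there. This gives an interior point where some $z_i=z_i'=0\le z_i''$. The $i$-th equation $d_iz_i''+cz_i'+\sum_jB_{ij}z_j=\tau(\B e_+)_i$ (with $d_1=1$, $d_2=d$) then yields $0\le\tau(\B e_+)_i<0$, a contradiction. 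The same argument works on $(-\infty,-M]$, and the strong maximum principle upgrades $w\ge0$ to $U'<0<V'$.

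\emph{Identification.} Uniqueness of the speed and of the normalized monotone front then identifies $(\Phi_0+h(\pi),c(\pi))$ with $(\Phi_\pi,c_\pi)$. No compactness or uniform decay estimates are needed.
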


\begin{proof} The proof of the local differentiability follows the same arguments as in \cite[Theorem 3.2]{kenne2026}. We only upgrade from the local differentiability to the global one. For every fixed $\pi_0\in Q$, there exists a neighbourhood $\V_0$ of $\pi_0\in Q$ such that the map
	\begin{equation*}
		\pi \longmapsto
		\Phi_{\pi}-\Phi_{\pi_0}\in H^2(\R)^2
	\end{equation*}
	is of class $C^\infty$ on $\V_0$.  Now let $\pi_1\in Q$ be arbitrary. Then there exists a neighbourhood $\V_1$ of $\pi_1$ such that the map
	\begin{equation*}
		\pi\mapsto
		\Phi_{\pi}-\Phi_{\pi_1}\in H^2(\R)^2
	\end{equation*}
	is of class $C^\infty$ on $\V_1$. Let $\pi\in \V_1$. We write
	\begin{equation}\label{decomposition}
		\Phi_{\pi}-\Phi_{\pi_0}= (\Phi_{\pi}-\Phi_{\pi_1}) + (\Phi_{\pi_1}-\Phi_{\pi_0}).
	\end{equation}
	The first term of \eqref{decomposition} belongs to $H^2(\R)^2$. We can deduce from Lemma \ref{lemdecay} that the second term belongs also to $H^2(\R)^2$.
	Since $\pi_1\in Q$ was arbitrary, we conclude that the  map $\pi\mapsto \Phi_{\pi}-\Phi_{\pi_0}$ is of class $C^\infty$ from $Q$ into $H^2(\R)^2$. 
	This completes the proof.
\end{proof}

We now state the following result determining the sign of the wave speed in the symmetric case. This result is obtained from the particular case $r=1$ in \cite[Theorem 1.1]{kenne2026} by using a scaling argument (see e.g., \cite{guo2013}).

 \begin{lemma}\label{lemmasign}
	Let $k_1=k_2:=k>1$ and $r,d>0$. Then
	\begin{equation}\label{eq3}
		\sign c_{k,k,r,d}=\sign(r-d).
	\end{equation}
That is
	\begin{equation}\label{eq4}
		c_{k,k,r,d}\left\{ 
		\begin{array}{llllll}
			>0, &  0<d<r,\\
			=0, &  d=r,\\
			<0, & d>r.
		\end{array} 
		\right.
	\end{equation}
\end{lemma}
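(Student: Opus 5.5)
The plan is to reduce the general case $r>0$ to the case $r=1$, where \cite[Theorem 1.1]{kenne2026} gives $\sign c_{k,k,1,d}=\sign(1-d)$, by rescaling both space and time in \eqref{model}, following \cite{guo2013}.

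\textbf{Step 1: the rescaling.} Let $(u,v)$ solve \eqref{model} with parameters $(k,k,r,d)$. Define
\begin{equation*}
\tilde u(s,y):=u\!\left(\frac{s}{r},\frac{y}{\sqrt r}\right),\qquad \tilde v(s,y):=v\!\left(\frac{s}{r},\frac{y}{\sqrt r}\right),
\end{equation*}
that is, $s=rt$ and $y=\sqrt r\,x$. Then $\tilde u_s=\frac1r u_t$ and $\tilde u_{yy}=\frac1r u_{xx}$. Hence $\tilde u$ satisfies
\begin{equation*}
\tilde u_s=\tilde u_{yy}+\tfrac1r\tilde u(1-\tilde u-k\tilde v).
\end{equation*}
Similarly, $\tilde v$ satisfies
\begin{equation*}
\tilde v_s=d\,\tilde v_{yy}+\tilde v(1-\tilde v-k\tilde u).
\end{equation*}
Now interchange the roles of the two species by setting $\hat u:=\tilde v$ and $\hat v:=\tilde u$. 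We obtain
\begin{equation*}
\hat u_s=d\,\hat u_{yy}+\hat u(1-\hat u-k\hat v),\qquad \hat v_s=\hat v_{yy}+\tfrac1r\hat v(1-\hat v-k\hat u).
\end{equation*}
Rescale space once more by $z=y/\sqrt d$. This turns the system into \eqref{model} with parameters $(k,k,\tfrac1r,\tfrac1d)$.

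\textbf{Step 2: tracking the front and its speed.} The front $(U,V)(x-ct)$ becomes a front $(\hat U,\hat V)(z-\hat c s)$ with $\hat c=-c/\sqrt{rd}$. The minus sign comes from the species swap. Indeed, after the swap $\hat U=\tilde V$ increases, so the orientation must be reversed by $z\mapsto -z$ to satisfy \eqref{eq2}-\eqref{eq2a}. The positive factor $1/\sqrt{rd}$ comes from the two changes of variables. Uniqueness of the monotone front \cite{kan1995} gives $\hat c=c_{k,k,1/r,1/d}$. Therefore
\begin{equation*}
\sign c_{k,k,r,d}=-\sign c_{k,k,1/r,1/d}.
\end{equation*}

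This single swap does not reach $r=1$, because it only inverts $r$. A cleaner route is to choose scalings that normalize the reaction rate of $v$ instead. Rescale time by $r$ and space by $\sqrt r$, then swap species, then rescale space so that the new first diffusion equals $1$. The growth rate of the second species becomes $1/r$, which is still not $1$. Rather than insisting on $r=1$, I would aim directly for the invariant. Under these scalings the pair $(r,d)$ is mapped so that the ratio $d/r$ is sent to $r/d$ or preserved, and the sign of the speed changes accordingly.

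\textbf{Step 3: the main obstacle.} The reduction from arbitrary $r$ to $r=1$ is not achievable by scalings alone, since the ratio of the growth rates is invariant under them. If \cite[Theorem 1.1]{kenne2026} covers only $r=1$, I would instead rerun its argument for general $r$. That argument is presumably an integral identity obtained by multiplying by $U'$, $V'$ or $\Psi$. In it, $d$ should enter only through the combination $d/r$ after dividing the second equation by $r$. Concretely, dividing the second equation of \eqref{eq1} by $r$ gives
\begin{equation*}
\tfrac{d}{r}V''+\tfrac{c}{r}V'+V(1-V-kU)=0.
\end{equation*}
The speed coefficients now differ between the two equations, which is precisely where the $r=1$ proof must be checked. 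Verifying that the sign argument survives unequal convection coefficients $c$ and $c/r$, whose signs agree, is the main obstacle.
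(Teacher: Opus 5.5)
There is a genuine gap: the reduction to $r=1$, which the whole argument rests on, is never carried out. Your Steps 1--2 are correct, but they only yield $\sign c_{k,k,r,d}=-\sign c_{k,k,1/r,1/d}$. That relation is consistent with \eqref{eq3} but does not determine the sign. Step 3 is a plan, not a proof: you leave open whether the argument of \cite{kenne2026} survives the mismatched convection coefficients $c$ and $c/r$.

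You are right that no change of variables maps $(k,k,r,d)$ to $(k,k,1,d/r)$. The paper does not use one. It invokes the sign invariance $\sign c_{k,k,r,d}=\sign c_{k,k,lr,ld}$ for all $l>0$ from \cite[Theorem 1.3]{guo2013}, takes $l=1/r$, and then applies \cite[Theorem 1.1]{kenne2026} to $c_{k,k,1,d/r}$.

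The missing idea is that you never need to treat $c\neq 0$ directly; it suffices to control the zero set along the rays $l\mapsto(lr,ld)$.

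\begin{itemize}
\item With $c=0$, the profile system becomes \eqref{standing0}, which depends on $(r,d)$ only through $\delta=d/r$ (divide the second equation by $r$).
\item So a standing monotone front at $(k,k,l^*r,l^*d)$ is also a standing monotone front at $(k,k,lr,ld)$ for every $l>0$. By uniqueness of the speed, $c_{k,k,lr,ld}$ vanishes for one $l$ if and only if it vanishes for all $l$.
\item The map $l\mapsto c_{k,k,lr,ld}$ is continuous on $(0,\infty)$ by Theorem \ref{theosmooth}. By the intermediate value theorem it therefore has constant sign on each ray.
\item Hence $\sign c_{k,k,r,d}=\sign c_{k,k,1,d/r}=\sign(1-d/r)=\sign(r-d)$, using \cite[Theorem 1.1]{kenne2026}.
\end{itemize}

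On this route, the obstacle you flag (convection coefficients $c$ versus $c/r$) disappears. The only place where $r$ and $d$ must be combined is at $c=0$, where the mismatch vanishes.
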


\begin{proof}
First, it is worth mentioning that the orientation of profiles in \cite{guo2013} are opposite to ours, so they speed $s=-c$. Next from \cite[Theorem 1.3]{guo2013}, we have that for any $l>0$, $\sign s(a,k,k,d)=\sign s(la,k,k,ld)$. In our setting, this means $\sign c_{k,k,r,d}=\sign c_{k,k,lr,ld}$ for any $l>0$. In particular for $l=\frac{1}{r}$, we have $\sign c_{k,k,r,d}=\sign c_{k,k,1,\frac{d}{r}}$.  From \cite[Theorem 1.1]{kenne2026},  $\sign c_{k,k,1,\delta}=\sign (1-\delta)$ for all $\delta>0$. Hence, by choosing $\delta=\frac{d}{r}>0$, we obtain
\begin{equation*}
\sign c_{k,k,r,d} =\sign c_{k,k,1,\frac{d}{r}}=\sign \left(1-\frac{d}{r}\right)=\sign(r-d),
\end{equation*}
because $r>0$. 
\end{proof}

\begin{remark}\label{rem1} The following observations are in order.
Here, $c_{k,k,r,d}<0$ means that the territory occupied by the species $v$ expands to the left, while $c_{k,k,r,d}>0$ means that the species $u$ advances to the right. Since $d=\frac{d_2}{d_1}$ and  $r=\frac{r_2}{r_1}$, we can rewrite \eqref{eq4} as 
\begin{equation}
c_{k,k,r,d} \left\{ 
	\begin{array}{llllll}
		>0, &  0<\frac{d_2}{r_2}<\frac{d_1}{r_1},\\
		=0, &  \frac{d_2}{r_2}=\frac{d_1}{r_1},\\
		<0, & \frac{d_2}{r_2}>\frac{d_1}{r_1}.
	\end{array} 
	\right.
\end{equation}
When $r_1=r_2$, we obtain the ``Unity is not strength" result (see e.g., \cite{girardin2019,girardin2018}): In a homogeneous environment under strong competition, the faster diffuser advances.  However, when the species have different growth rates, the outcome may change. For example we can have $d_2>d_1$ but the species $v$ recedes. This happens when its intrinsic growth rate is sufficiently larger compared to the one of the species $u$. Conversely, the species $v$ may advances  if $d_2<d_1$ if its intrinsic growth rate is sufficiently smaller compared to the one of the species $u$. 
\end{remark}

Next, for standing fronts, we use the normalization $(r,d)=(1,\delta)$ where $\delta=\frac{d}{r}$. The profile equations \eqref{eq1}-\eqref{eq2} with $c=0$ become
\begin{equation}\label{standing0}
U''+U(1-U-k_1V)=0, \qquad \delta V''+V(1-V-k_2U)=0.
\end{equation}

\begin{lemma}\label{lemineq1}
let $k_1,k_2>1$ and  let $(U,V)$ be a monotone standing front. We set $W:=1-U-V$. Then $W>0$ in $\R$ and
	\begin{equation}\label{ari2}
		0<-U'(\xi)\leq\sqrt{k_1-1}\,U(\xi),\qquad
		0<V'(\xi)\leq\sqrt{\frac{k_2-1}{\delta}}\,V(\xi),\qquad \xi\in\R.
	\end{equation}
\begin{equation}\label{ari3}
	|U''(\xi)|\leq k_1U(\xi),\qquad
	|V''(\xi)|\leq\frac{k_2}{\delta}V(\xi),\qquad \text{for all } \xi\in\R,
\end{equation}
and
\begin{equation}\label{ari4}
	\lim_{\xi\to+\infty}\frac{(U'(\xi))^2}{U(\xi)}=0,
	\qquad
	\lim_{\xi\to-\infty}\frac{(V'(\xi))^2}{V(\xi)}=0.
\end{equation}
Finally, the following identities hold true:
	\begin{equation}\label{ari0}
	\int_{\R} UVU'd\xi=-\frac1{6k_1},\qquad 	\int_{\R} UVV'd\xi=\frac1{6k_2},
	\end{equation}
	\begin{equation}\label{ari1}
		\int_{\R} V^2U'd\xi=-\frac1{3k_2},\qquad \int_{\R} U^2V'd\xi=\frac1{3k_1}.
	\end{equation}
\end{lemma}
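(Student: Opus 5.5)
The plan is to obtain every assertion from the two scalar equations in \eqref{standing0}, using only $0<U,V<1$, the monotonicity \eqref{eq2a}, the limits \eqref{eq2} and the exponential decay of $\Phi'$ from Lemma \ref{lemdecay}. I will prove the claims in the order: positivity of $W$, the second-derivative bounds \eqref{ari3}, the first-derivative bounds \eqref{ari2} via a Riccati argument, the limits \eqref{ari4}, and finally the integral identities.

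\emph{Positivity of $W$.} Suppose $W(\xi_0)\le 0$ at some $\xi_0$. Then
\begin{equation*}
1-U-k_1V=W-(k_1-1)V<0 \quad\text{and}\quad 1-V-k_2U=W-(k_2-1)U<0 \quad\text{at } \xi_0.
\end{equation*}
By \eqref{standing0} this gives $U''>0$ and $V''>0$ there, hence $W''<0$ on the whole set $\{W\le 0\}$. Take a connected component $(\alpha,\beta)$ of $\{W<0\}$. By \eqref{eq2} we have $W(\pm\infty)=0$, and $W(\alpha)=W(\beta)=0$ at finite endpoints. On this component $W$ is strictly concave, and I consider three cases.
\begin{itemize}
\item If both endpoints are finite, concavity gives $W\ge\min\{W(\alpha),W(\beta)\}=0$ on $(\alpha,\beta)$.
\item If $\beta=+\infty$ and $\alpha$ is finite, a concave function with a finite limit at $+\infty$ is nondecreasing, so $W\ge W(\alpha)=0$. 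The case $\alpha=-\infty$, $\beta$ finite is symmetric.
\item If the component is all of $\R$, then $W$ is bounded and concave on $\R$, hence constant, hence $W\equiv0$.
\end{itemize}
Each case contradicts $W<0$ on the component, so $W\ge0$. Strict positivity follows: at a zero $\xi_0$ of $W$, $W$ attains its minimum, so $W''(\xi_0)\ge0$, which contradicts $W''(\xi_0)<0$.

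\emph{Bounds \eqref{ari3} and \eqref{ari2}.} Since $U+V<1$, we have $-1<U+k_1V-1<(k_1-1)V<k_1-1$. Therefore $U''=U(U+k_1V-1)$ satisfies $|U''|\le k_1U$ and $U''\le(k_1-1)U$. The same argument applied to $\delta V''=V(V+k_2U-1)$ gives $|V''|\le\frac{k_2}{\delta}V$ and $V''\le\frac{k_2-1}{\delta}V$.

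For \eqref{ari2}, set $z=U'/U$, which is negative by \eqref{eq2a}. It satisfies the Riccati inequality
\begin{equation*}
z'=\frac{U''}{U}-z^2\le (k_1-1)-z^2 .
\end{equation*}
Suppose $z(\xi_0)<-\sqrt{k_1-1}$. Then $z'<0$ from $\xi_0$ on, so $z$ stays below $-\sqrt{k_1-1}$ and satisfies $z'\le (k_1-1)-z^2$. Comparison with the explicit Riccati solution shows that $z\to-\infty$ at a finite $\xi_1>\xi_0$. This is impossible, because $U$ is smooth and positive on $\R$. Hence $-U'\le\sqrt{k_1-1}\,U$.

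For $V$ I run the same argument backwards in $\xi$ with $y=V'/V>0$: $y'\ge y^2-\frac{k_2-1}{\delta}$, and $y(\xi_0)>\sqrt{(k_2-1)/\delta}$ forces $y$ to blow up at a finite $\xi<\xi_0$. I expect this Riccati comparison, and getting its direction right for each component, to be the main technical point. Everything else is algebra.

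\emph{Limits \eqref{ari4}.} These follow at once from the bounds just proved: $(U')^2/U=z^2U\le(k_1-1)U\to0$ as $\xi\to+\infty$, and similarly $(V')^2/V\le\frac{k_2-1}{\delta}V\to0$ as $\xi\to-\infty$.

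\emph{Integral identities.} Multiply the $U$-equation in \eqref{standing0} by $U'$ to get
\begin{equation*}
\bigl(\tfrac12U'^2\bigr)'+UU'-U^2U'-k_1UVU'=0 .
\end{equation*}
I integrate over $\R$. The boundary term in $U'^2$ vanishes by Lemma \ref{lemdecay}, while $\int_\R UU'\,d\xi=-\frac12$ and $\int_\R U^2U'\,d\xi=-\frac13$. This yields $\int_\R UVU'\,d\xi=-\frac1{6k_1}$. The same computation with the $V$-equation multiplied by $V'$ (the factor $\delta$ only multiplies a boundary term) yields $\int_\R UVV'\,d\xi=\frac1{6k_2}$.

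For \eqref{ari1}, use $(UV^2)'=U'V^2+2UVV'$ and $(U^2V)'=2UU'V+U^2V'$. Both $UV^2$ and $U^2V$ vanish at $\pm\infty$, so integrating gives
\begin{equation*}
\int_\R V^2U'\,d\xi=-2\int_\R UVV'\,d\xi=-\frac1{3k_2},\qquad \int_\R U^2V'\,d\xi=-2\int_\R UVU'\,d\xi=\frac1{3k_1}.
\end{equation*}
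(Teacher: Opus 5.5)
Your proof is correct. It reaches \eqref{ari2} by a different mechanism from the paper's, and it proves two parts that the paper only cites.

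\textbf{The derivative bounds \eqref{ari2}.} The paper uses the auxiliary function $P=(U')^2-(k_1-1)U^2$. From $U''<(k_1-1)U$ and $U'<0$ it gets $P'=2U'(U''-(k_1-1)U)>0$. Since $P(+\infty)=0$ by Lemma \ref{lemdecay}, this forces $P<0$ on $\R$. Symmetrically, $Q=(V')^2-\frac{k_2-1}{\delta}V^2$ is decreasing with $Q(-\infty)=0$.

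Your Riccati argument for $z=U'/U$ rests on the same differential inequality, since $P=U^2\bigl(z^2-(k_1-1)\bigr)$. The difference is how you close it: by finite-$\xi$ blow-up rather than by the boundary value at infinity. You only need $U$ to stay smooth and positive on $[\xi_0,\infty)$, not the decay of $U$ and $U'$. The paper's route is shorter and gives the strict inequality at once. Yours is more self-contained.

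\textbf{Positivity of $W$ and the identities.} The paper does not prove $W>0$ or \eqref{ari0}--\eqref{ari1} here; it defers to \cite{kenne2026}. Your concavity argument on the components of $\{W<0\}$ and your energy computations supply these correctly. The case analysis for $W$ can be shortened. Since $W(\pm\infty)=0$, if $W\le 0$ somewhere then $W$ attains a nonpositive global minimum. At that point $W''\ge 0$, which contradicts $W''<0$ on $\{W\le 0\}$.

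\textbf{A sign slip to fix.} With $'$ meaning $d/d\xi$, as in the paper, the inequality for $y=V'/V$ is $y'=\frac{V''}{V}-y^2\le \frac{k_2-1}{\delta}-y^2$. It is not $y'\ge y^2-\frac{k_2-1}{\delta}$. Your version is correct only for the derivative in the reversed variable $s=-\xi$, so state it that way.

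With the correct inequality, the conclusion you want follows. If $y(\xi_0)>\sqrt{(k_2-1)/\delta}$, then $y'<0$, so $y$ increases as $\xi$ decreases. Then $(1/y)'\ge \epsilon>0$ on $(-\infty,\xi_0]$, which forces blow-up at a finite $\xi<\xi_0$, exactly as you claim.
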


\begin{proof}
	The proof of $W>0$ and  \eqref{ari0}-\eqref{ari1} follows from similar arguments as in \cite[proof of Theorem 4.1 and Corollary 4.2]{kenne2026} (see also \cite{guo2013}).   We only prove the first identity in \eqref{ari2}, the second is obtained by similar arguments. 
	We rewrite \eqref{standing0} as
	\begin{equation}\label{standing01}
		U''=U((k_1-1)V-W)=0, \qquad \delta V''=\frac{V}{\delta}((k_2-1)U-W)=0.
	\end{equation}
	Since $0<U,V<1$ and $W>0$, it follows from \eqref{standing01} that 
		\begin{equation}\label{standing02}
		U''< U(k_1-1), \qquad \delta V''< \frac{(k_2-1)}{\delta}V.
	\end{equation}
	Now set $P=(U')^2-(k_1-1)U^2$ then since $U'<0$, we have thanks to \eqref{standing02}  that $P'=2(U'(U''-(k_1-1)U))>0$. Using Lemma \ref{lemdecay}, we deduce that $\dis \lim_{\xi\to +\infty}P(\xi)=0$. Combining the previous facts imply that $P(\xi)<0$ for all $\xi \in \R$. That is $(U')^2<(k_1-1)U^2$ on $\R$. But $U'<0$, so this latter inequality gives the first identity in \eqref{ari2}. Similar arguments with $Q:=(V')^2-\frac{k_2-1}{\delta} V^2$ at $-\infty$ gives the second identity in \eqref{ari2}.  
	Next, from \eqref{standing0}, we use again $0<U,V<1$ and since $k_1,k_2>1$ to deduce that
\begin{equation*}
	|U''|=U|1-U-k_1V|\leq k_1U, \qquad |V''|=\frac{V}{\delta}|1-V-k_2 U| \leq \frac{k_2}{\delta} V.
\end{equation*}
Hence \eqref{ari3} holds. Finally taking the square in  both inequality in \eqref{ari2} while using \eqref{eq2} gives  \eqref{ari4}. This completes the proof.
\end{proof}

\begin{proposition}\label{propenergyidentities} Let $(U,V)$ be a solution to \eqref{standing0}. Then we have
	\begin{equation}\label{energyimp1}
	\frac{k_1^2}{6k_2}-\frac{1}{6}=\frac{1}{2}\int_{\R}\frac{(-U')^3}{U^2}d\xi+\frac{1}{2}\int_{\R}\frac{(-U')(U'')^2}{U^2}d\xi
	\end{equation}
	and 
	\begin{equation}\label{energyimp2}
	\frac{k_2^2}{6k_1}-\frac{1}{6}=\frac{\delta}{2}\int_{\R}\frac{(V')^3}{V^2}d\xi+\frac{\delta^2}{2}\int_{\R}\frac{V'(V'')^2}{V^2}d\xi.
	\end{equation}
	The four integrals are finite and the two right-hand sides are strictly positive.
\end{proposition}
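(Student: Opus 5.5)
The plan is to derive both identities by integrating exact derivatives together with the quadratic moments \eqref{ari0}--\eqref{ari1}. The key auxiliary quantity is $G:=(U')^2/U$. Throughout I use that, by \eqref{standing0}, $U''/U=-(1-U-k_1V)$, so that $(U'')^2/U^2=(1-U-k_1V)^2$.

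\textbf{Finiteness and positivity.} These come first and are immediate from Lemma \ref{lemineq1}. By \eqref{ari2} and \eqref{ari3},
\[
0<\frac{(-U')^3}{U^2}\le (k_1-1)(-U'), \qquad 0<\frac{(-U')(U'')^2}{U^2}\le k_1^2(-U').
\]
Since $\int_\R(-U')\,d\xi=1$, both integrals are finite, and they are strictly positive because $U'<0$. The same argument with the $V$-bounds and $V'>0$ handles \eqref{energyimp2}.

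\textbf{The first-derivative term.} Differentiating $G$ gives
\[
G'=\frac{2U'U''}{U}-\frac{(U')^3}{U^2},
\qquad\text{hence}\qquad
\frac{(-U')^3}{U^2}=G'+2U'(1-U-k_1V).
\]
The boundary values vanish: $G(+\infty)=0$ by \eqref{ari4}, and $G(-\infty)=0$ since $U\to1$ and $U'\to0$ there. Therefore
\[
\tfrac12\int_\R\frac{(-U')^3}{U^2}\,d\xi=\int_\R U'(1-U-k_1V)\,d\xi .
\]

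\textbf{The second-derivative term.} I rewrite it as $-\tfrac12\int_\R U'(1-U-k_1V)^2\,d\xi$ and expand
\[
(1-U-k_1V)^2=1+U^2+k_1^2V^2-2U-2k_1V+2k_1UV .
\]
The terms involving $\int_\R VU'\,d\xi$, which are not directly known, cancel between the two pieces: they contribute $-k_1\int_\R VU'$ from the first-derivative term and $+k_1\int_\R VU'$ from the second. What remains is evaluated by exact integration and \eqref{ari0}--\eqref{ari1}:
\[
\int_\R U'=-1,\quad \int_\R UU'=-\tfrac12,\quad \int_\R U^2U'=-\tfrac13,\quad \int_\R V^2U'=-\tfrac1{3k_2},\quad \int_\R UVU'=-\tfrac1{6k_1}.
\]
Collecting terms gives
\[
-\tfrac12+\tfrac16+\tfrac{k_1^2}{6k_2}+\tfrac16=\frac{k_1^2}{6k_2}-\frac16,
\]
which is \eqref{energyimp1}.

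\textbf{The identity for $V$.} For \eqref{energyimp2} I repeat the argument with $H:=(V')^2/V$, using $\delta V''/V=-(1-V-k_2U)$. Then $\delta V'(V'')^2/V^2=\delta^{-1}V'(1-V-k_2U)^2$, so multiplying by $\delta^2/2$ gives $\tfrac{\delta}{2}\int_\R V'(1-V-k_2U)^2\,d\xi$. This matches the factor $\delta$ produced by the $H'$ identity, namely $\tfrac{\delta}{2}\int_\R(V')^3/V^2\,d\xi=-\int_\R V'(1-V-k_2U)\,d\xi$. The boundary terms vanish by \eqref{ari4} at $-\infty$ and by $V\to1$, $V'\to0$ at $+\infty$. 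The $\int_\R UV'$ terms cancel as before, and the rest uses $\int_\R V'=1$, $\int_\R VV'=\tfrac12$, $\int_\R V^2V'=\tfrac13$, $\int_\R U^2V'=\tfrac1{3k_1}$ and $\int_\R UVV'=\tfrac1{6k_2}$.

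The main obstacle is justifying the vanishing boundary terms of $G$ and $H$, which is exactly where \eqref{ari4} is needed. The other point requiring care is the cancellation of the unknown cross integral $\int_\R VU'\,d\xi$; after that, the computation is bookkeeping.
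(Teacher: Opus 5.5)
Your treatment of \eqref{energyimp1} is correct. It is essentially the paper's computation run in the opposite direction. The paper sets $S=1-U-k_1V$, substitutes $k_1V=1-U-S$ into $k_1^2\int_\R UVV'\,d\xi=\frac{k_1^2}{6k_2}$ and integrates by parts. You instead expand $S^2$, use two of the moments in \eqref{ari0}--\eqref{ari1}, and let $\int_\R VU'\,d\xi$ cancel. Your explicit finiteness bounds via \eqref{ari2}--\eqref{ari3} and $\int_\R(-U')\,d\xi=1$ are also fine.

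The problem is the $\delta$-scaling in the $V$ part. From \eqref{standing0}, $V''/V=-(1-V-k_2U)/\delta$, so $(V'')^2/V^2=\delta^{-2}(1-V-k_2U)^2$ and
\[
\frac{\delta^2}{2}\int_\R\frac{V'(V'')^2}{V^2}\,d\xi=\frac12\int_\R V'(1-V-k_2U)^2\,d\xi .
\]
You wrote $\frac{\delta}{2}\int_\R V'(1-V-k_2U)^2\,d\xi$ instead. Multiplying $\delta^{-1}V'(1-V-k_2U)^2$ by $\frac{\delta^2}{2}$ corresponds to $\frac{\delta^3}{2}\int_\R V'(V'')^2/V^2\,d\xi$, which is not the term in the statement. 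The slip matters. With your factor, adding your (correct) identity $\frac{\delta}{2}\int_\R (V')^3/V^2\,d\xi=-\int_\R V'(1-V-k_2U)\,d\xi$ gives
\[
-\frac12+\frac{\delta}{3}+\frac{\delta k_2^2}{6k_1}+(1-\delta)k_2\int_\R UV'\,d\xi .
\]
So the $\int_\R UV'$ terms do not cancel ``as before'', and the constant is wrong unless $\delta=1$. There is no factor of $\delta$ on the right of the $H'$ identity to ``match''.

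With the correct factor the argument goes through exactly as for $U$. Set $R=1-V-k_2U$. Then $-\int_\R V'R\,d\xi=-\frac12+k_2\int_\R UV'\,d\xi$. Expanding $R^2$ and using $\int_\R UVV'\,d\xi=\frac1{6k_2}$ and $\int_\R U^2V'\,d\xi=\frac1{3k_1}$ gives $\frac12\int_\R V'R^2\,d\xi=\frac13+\frac{k_2^2}{6k_1}-k_2\int_\R UV'\,d\xi$. These sum to $\frac{k_2^2}{6k_1}-\frac16$. The fix is a one-line correction, but as written your $V$ step asserts a cancellation that your own formula does not produce.
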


\begin{proof}
	We only prove \eqref{energyimp1}, \eqref{energyimp2} being obtained similarly. We set
	\begin{equation}\label{ari5}
		S:=1-U-k_1V=-\frac{U''}{U},
	\end{equation}
	where the second equality follows from the first equation in \eqref{standing0}. Then
	$k_1V=1-U-S$ and $k_1V'=-U'-S'$. Next, we multiply the second identity of \eqref{ari0}, by $k_1^2$ to obtain
	\begin{equation}\label{ari6}
		\frac{k_1^2}{6k_2}=\int_{\R} U(1-U-S)(-U'-S')d\xi=\int_{\mathbb{R}} U(1-U)\left(-U^{\prime}\right) d \xi-\int_{\mathbb{R}} U(1-U) S^{\prime} d \xi -\int_{\mathbb{R}} U S\left(-U^{\prime}\right) d \xi+\int_{\mathbb{R}} U S S^{\prime} d \xi
	\end{equation}
	With a change of variables, the first integral in \eqref{ari6} gives $\int_{\R} U(1-U)(-U')d\xi =\frac{1}{6}$.   Using an integration by parts, the remaining three integrals give
	\begin{equation}\label{ari7}
		-\int_\R(1-U)(-U')Sd\xi+\frac12\int_\R(-U')S^2d\xi.
	\end{equation}
	The boundary terms are all zero because $(U,V,S)(-\infty)=(1,0,0)$ and
	$(U,V,S)(+\infty)=(0,1,1-k_1)$. Using the expression of $S$ given in \eqref{ari5} in the first term of
	\eqref{ari7} and  thanks again to an integration by parts, we obtain 
	\begin{equation*}
		\begin{array}{lllll}
\dis -\int_{\R}(1-U)(-U')Sd\xi= \int_{\R}\frac{1-U}{U}(-U')U''d\xi=\frac{1}{2}\int_{\R}\frac{(-U')^3}{U^2}d\xi.
		\end{array}
	\end{equation*}
	The boundary term vanishes at $-\infty$ because $1-U\to0$ and $U'\to 0$, and at $+\infty$
	because $\frac{(U')^2}{U}\to 0$ (by Lemma \ref{lemineq1}). The second term of
	\eqref{ari7} is $\frac{1}{2}\int_\R(-U')\frac{(U'')^2}{U^2}d\xi$ thanks to \eqref{ari5}. This proves
	\eqref{energyimp1}. For \eqref{energyimp2}, we set  $R:=1-V-k_2U=-\delta\frac{V''}{V}$ and we perform similar arguments as above. 

	Finally, using Lemmas \ref{lemdecay} and \ref{lemineq1}, we can deduce that the four integrals are finite. Since $U'<0<V'$, 
	the first integral in \eqref{energyimp1} and \eqref{energyimp2} is strictly positive. This completes the proof.
\end{proof}
\begin{corollary}\label{corobounds}
	Let $k_1,k_2>1$. Assume that a standing monotone front exists for $(k_1,k_2,\delta)$, then
	\begin{equation}\label{strictbounds}
		\sqrt{k_2}<k_1<k_2^2.
	\end{equation}
\end{corollary}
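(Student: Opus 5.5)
The bounds follow directly from Proposition \ref{propenergyidentities}, which holds for every monotone standing front $(U,V)$ solving \eqref{standing0} with parameters $(k_1,k_2,\delta)$, $k_1,k_2>1$.

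First, the right-hand side of \eqref{energyimp1} is strictly positive: the first integral $\frac12\int_\R (-U')^3U^{-2}\,d\xi$ is positive because $U'<0$ everywhere, and the second integral is nonnegative. Finiteness of both integrals is part of Proposition \ref{propenergyidentities}, so the identity is meaningful. Hence
\begin{equation*}
\frac{k_1^2}{6k_2}-\frac16>0, \qquad\text{that is,}\qquad k_1^2>k_2 .
\end{equation*}
Since $k_1>0$, this gives $k_1>\sqrt{k_2}$.

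Second, apply the same reasoning to \eqref{energyimp2}. Since $V'>0$ and $\delta>0$, its right-hand side is strictly positive. Therefore
\begin{equation*}
\frac{k_2^2}{6k_1}-\frac16>0, \qquad\text{that is,}\qquad k_2^2>k_1 .
\end{equation*}
Combining the two inequalities yields $\sqrt{k_2}<k_1<k_2^2$, which is \eqref{strictbounds}.

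There is no real obstacle here. The only point to watch is the strictness of positivity, which comes from the cubic terms $(-U')^3$ and $(V')^3$ being strictly positive pointwise under the monotonicity \eqref{eq2a}.
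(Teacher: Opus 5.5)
Your proof is correct and is the same as the paper's: the strict positivity of the right-hand sides of \eqref{energyimp1} and \eqref{energyimp2}, which comes from the cubic terms under $U'<0<V'$, yields $k_1^2>k_2$ and $k_2^2>k_1$. These two inequalities together give \eqref{strictbounds}.
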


\begin{proof}
	The right-hand side of \eqref{energyimp1} is strictly positive, so $k_1^2>k_2$. Also, the right-hand side of
	\eqref{energyimp2} is strictly positive, so $k_2^2>k_1$.
\end{proof}

The following result gives another important energy identity. It is exactly the same identity  in \cite[Theorem 1.1]{ma2026} after changing the orientation. It follows by the same integrations by parts used in \cite[Theorem 4.1]{kenne2026}, while keeping $k_1$ and $k_2$ independent.
\begin{proposition}\label{zerospeed}
	Let $(U,V)$ be a monotone standing front and set $W=1-U-V$. Then
	\begin{equation}\label{imp1}
	J:=\int_{\R} W^2(-U')d\xi=\int_{\R} W^2V'd\xi>0
	\end{equation}
	and 
		\begin{equation}\label{imp2}
	(1-\delta)J =\frac{(k_1-k_2)[(k_2-1)+\delta(k_1-1)]}{3k_1k_2}.
	\end{equation}
\end{proposition}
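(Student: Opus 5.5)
The plan is to reduce everything to integrals already computed in Lemma \ref{lemineq1}, namely \eqref{ari0}--\eqref{ari1}, plus one new cross integral
\begin{equation*}
I:=\int_\R V(-U')\,d\xi,
\end{equation*}
and then to pin down $I$ by a second identity that involves $\delta$.

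\emph{Equality and positivity.} Since $W=1-U-V$, we have $U'+V'=-W'$, so
\begin{equation*}
\int_\R W^2(U'+V')\,d\xi=-\Big[\tfrac13W^3\Big]_{-\infty}^{+\infty}=0,
\end{equation*}
because $W(\pm\infty)=0$ by \eqref{eq2}. This gives $\int_\R W^2(-U')\,d\xi=\int_\R W^2V'\,d\xi$. Positivity of $J$ then follows from $W>0$ (Lemma \ref{lemineq1}) and $U'<0$. Finiteness of all the integrals below comes from Lemma \ref{lemdecay}.

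\emph{Expressing $J$ through $I$.} I expand
\begin{equation*}
W^2=(1-U)^2-2(1-U)V+V^2 .
\end{equation*}
The substitution $u=U$ gives $\int_\R(1-U)^2(-U')\,d\xi=\int_0^1(1-u)^2\,du=\tfrac13$. Next, $\int_\R UV(-U')\,d\xi=\tfrac1{6k_1}$ and $\int_\R V^2(-U')\,d\xi=\tfrac1{3k_2}$ by \eqref{ari0}--\eqref{ari1}. Together these give
\begin{equation*}
J=\frac13-2I+\frac1{3k_1}+\frac1{3k_2}.
\end{equation*}
Since $[UV]_{-\infty}^{+\infty}=0$, we also have $\int_\R UV'\,d\xi=I$.

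\emph{The $\delta$-dependent relation for $I$.} This is the key step. I multiply the first equation of \eqref{standing0} by $V'$ and the second by $U'$, then integrate over $\R$.
\begin{itemize}
\item From the first equation, using $\int_\R U(1-U)V'\,d\xi=I-\tfrac1{3k_1}$ (by \eqref{ari1}) and $\int_\R UVV'\,d\xi=\tfrac1{6k_2}$ (by \eqref{ari0}), I get
\begin{equation*}
\int_\R U''V'\,d\xi=-\Big(I-\frac1{3k_1}\Big)+\frac{k_1}{6k_2}.
\end{equation*}
\item From the second equation, in the same way,
\begin{equation*}
\delta\int_\R V''U'\,d\xi=\Big(I-\frac1{3k_2}\Big)-\frac{k_2}{6k_1}.
\end{equation*}
\item Integrating by parts, $\int_\R V''U'\,d\xi=-\int_\R U''V'\,d\xi$; the boundary terms vanish by Lemma \ref{lemdecay}.
\end{itemize}
Eliminating the unknown mixed integral between these three relations yields
\begin{equation*}
(1-\delta)I=\frac1{3k_2}+\frac{k_2}{6k_1}-\frac{\delta}{3k_1}-\frac{\delta k_1}{6k_2}.
\end{equation*}

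\emph{Conclusion.} Substituting this into $(1-\delta)J=(1-\delta)\big(\tfrac13+\tfrac1{3k_1}+\tfrac1{3k_2}\big)-2(1-\delta)I$ and putting everything over $3k_1k_2$ should give \eqref{imp2}. I have not expanded this final algebra to confirm it collapses to the factored form; it is a purely polynomial check.

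The only real obstacle is the previous step: finding a multiplier that brings in $\delta$ while every other term reduces to known integrals. The cross-multiplication by $V'$ and $U'$ does this, because the single unknown $\int_\R U''V'\,d\xi$ is eliminated by the integration by parts.
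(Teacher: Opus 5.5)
Your proposal is correct and follows the paper's route. You use the same multipliers $V'$ and $U'$ on \eqref{standing0}, the same integration by parts $\int_\R U''V'\,d\xi=-\int_\R U'V''\,d\xi$, and the same known integrals \eqref{ari0}--\eqref{ari1}; the only difference is that you eliminate the cross integral $I$ explicitly, whereas the paper absorbs it through the identity $2U(1-U-k_1V)+W^2=(1-V)^2-U^2-2(k_1-1)UV$ and its symmetric counterpart. The final algebra you left unchecked does close: after multiplying by $3k_1k_2$, the $\delta$-free terms give $k_1k_2-k_2^2+k_2-k_1=(k_1-k_2)(k_2-1)$ and the coefficient of $\delta$ is $k_1^2-k_1k_2-k_1+k_2=(k_1-k_2)(k_1-1)$, which is exactly \eqref{imp2}.
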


\begin{proof}
	Since $W'=-U'-V'$ and $W(\pm\infty)=0$, we have
\begin{equation*}
\int_{\R} W^2(-U')d\xi-\int_{\R} W^2V'd\xi=\int_{\R} W^2W'd\xi=0
\end{equation*}
which proves \eqref{imp1}. Since $(U'V')(\pm\infty) =0$ then
	$\int_{\R} U''V'd\xi=-\int_{\R} U'V''d\xi$. We multiply the first and the second equations in \eqref{standing0} by $V'$ and $U'$, respectively, we integrate over $\R$ and we combine the resulting equalities to obtain
	\begin{equation}\label{ari8}
		\delta\int_{\R} U(1-U-k_1V)V'd\xi+\int_{\R} V(1-V-k_2U)U'd\xi=0.
	\end{equation}
	Moreover,
	\begin{equation*}
		2U(1-U-k_1V)+W^2=(1-V)^2-U^2-2(k_1-1)UV
	\end{equation*}
	and 
\begin{equation*}
2V(1-V-k_2U)+W^2=(1-U)^2-V^2-2(k_2-1)UV.
\end{equation*}
Thus multiplying the first equality by $V'$ and the second one by $U'$ and integrating over $\R$, while using \eqref{ari0}-\eqref{ari1}, $\int_{\R}(1-V)^2V'd\xi=\frac{1}{3}$ and $\int_{\R}(1-U)^2U'd\xi=-\frac{1}{3}$ we arrive at
\begin{equation*}
2\int_{\R} U(1-U-k_1V)V'd\xi+J =-\frac{(k_1-1)(k_1-k_2)}{3k_1k_2}
\end{equation*}
and 
\begin{equation*}
2\int_{\R} V(1-V-k_2U)U'd\xi-J=-\frac{(k_2-1)(k_1-k_2)}{3k_1k_2},
\end{equation*}
respectively.	Substituting these relations  in \eqref{ari8} gives \eqref{imp2}.
\end{proof}

\begin{corollary}\label{corollarysign}
	Let $k_1,k_2>1$ and let $(U,V)$ be a monotone standing front. Then
	\begin{equation}\label{signrel}
		\sign(k_1-k_2)=\sign(1-\delta),
	\end{equation}
	with the convention that the two sides vanish simultaneously.
\end{corollary}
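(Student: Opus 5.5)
This is a direct sign comparison in the identity \eqref{imp2} of Proposition \ref{zerospeed}. The plan is to write \eqref{imp2} as
\begin{equation*}
(1-\delta)J=(k_1-k_2)\,M,\qquad M:=\frac{(k_2-1)+\delta(k_1-1)}{3k_1k_2},
\end{equation*}
and then determine the signs of $J$ and $M$.

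First, $M>0$: since $k_1,k_2>1$ and $\delta>0$, the numerator $(k_2-1)+\delta(k_1-1)$ is a sum of two positive terms, and the denominator $3k_1k_2$ is positive. Second, $J>0$ is part of the statement of Proposition \ref{zerospeed}. It follows from $W=1-U-V>0$ (Lemma \ref{lemineq1}) together with $-U'>0$, so the integrand $W^2(-U')$ is continuous, nonnegative and not identically zero.

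Taking signs of both sides, $\sign(1-\delta)\cdot\sign J=\sign(k_1-k_2)\cdot\sign M$, which gives $\sign(1-\delta)=\sign(k_1-k_2)$. In particular, $\delta=1$ holds if and only if the right-hand side of \eqref{imp2} vanishes, which happens if and only if $k_1=k_2$. This is the stated convention that both sides vanish simultaneously.

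There is no real obstacle here; the work has already been done in Proposition \ref{zerospeed}. The only point to check is that \eqref{imp2} applies to every monotone standing front, with $\delta=d/r$ under the normalization \eqref{standing0}. This holds because the integration-by-parts identities \eqref{ari0}--\eqref{ari1} were derived for general independent $k_1,k_2>1$.
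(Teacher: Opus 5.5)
Your proposal is correct and matches the paper's proof: both read off \eqref{signrel} directly from \eqref{imp2}, using that $J$, $3k_1k_2$ and $(k_2-1)+\delta(k_1-1)$ are strictly positive. Your extra justification of $J>0$, via $W>0$ and $-U'>0$, just spells out what Proposition \ref{zerospeed} already asserts.
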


\begin{proof}
	In \eqref{imp2}, the terms $J$, $3k_1k_2$ and $(k_2-1)+\delta(k_1-1)$ are strictly positive. The conclusion follows.
\end{proof}
\begin{remark}\label{remarksiha}
We notice that when $k_1=k_2$, we obtain from \eqref{signrel} that $\delta=1$. That is $d=r$. So a standing front can only exists when $\delta=r$. This is a generalization of \cite[Theorem 4.1]{kenne2026}. 
\end{remark}

The next result constructs the unique zero-speed threshold and completely characterizes the sign of the wave speed. A similar result was established in \cite[Theorem 6.1]{nakamura2026}. We also provide explicit bounds and additional properties.  We use Corollary \ref{corobounds}, the monotonicity of the speed with respect to the reaction parameters established in \cite{kan1995} and the continuity of the front.
\begin{proposition}\label{propthreshold}
	For every $k_2>1$ and $\delta>0$ there exists a unique number $\K(\delta;k_2)$ such that, for
	every $d,r>0$ with $\delta=\frac{d}{r}$ and every $k_1>1$,
	\begin{equation}\label{signK00}
		c_{k_1,k_2,r,d}\begin{cases}
			>0,&k_1<\K(\delta;k_2),\\
			=0,&k_1=\K(\delta;k_2),\\
			<0,&k_1>\K(\delta;k_2).
		\end{cases}
	\end{equation}
	It satisfies $\sqrt{k_2}<\K(\delta;k_2)<k_2^2$ and
	$\mathcal K(1;k_2)=k_2$. Moreover,
	\begin{equation}\label{Kexchange}
		\alpha=\K(\delta;\beta)\quad\Longleftrightarrow\quad
		\beta=\K(\delta^{-1};\alpha)
	\end{equation}
	for every $\alpha,\beta>1$ and $\delta>0$.
\end{proposition}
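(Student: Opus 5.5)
We first reduce to the normalized case $r=1$. The profile system \eqref{eq1} with parameters $(k_1,k_2,r,d)$ is mapped to the system with $(k_1,k_2,1,d/r)$ by the rescaling $\xi\mapsto\sqrt{r}\,\xi$ in the second equation. This is not a clean rescaling of the whole system, since the first equation carries growth rate $1$. We therefore rely instead on the scaling argument of \cite{guo2013}, already used in Lemma \ref{lemmasign}, which shows that $\sign c_{k_1,k_2,r,d}=\sign c_{k_1,k_2,1,\delta}$ with $\delta=d/r$. Its proof is a sub/supersolution comparison and does not use $k_1=k_2$. Hence it suffices to define $\K(\delta;k_2)$ through the family $k_1\mapsto c_{k_1,k_2,1,\delta}$ and to check that the sign of the speed depends only on $(k_1,k_2,\delta)$.

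Fix $k_2>1$ and $\delta>0$, and set $g(k_1):=c_{k_1,k_2,1,\delta}$ for $k_1>1$. By Theorem \ref{theosmooth}, $g$ is continuous (indeed $C^\infty$). By Kan-On \cite{kan1995}, $g$ is strictly decreasing in $k_1$.

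Corollary \ref{corobounds} shows that $g$ has no zero in $(1,\sqrt{k_2}]\cup[k_2^2,\infty)$. Since these intervals are connected and $g$ is continuous, $g$ has constant sign on each of them. To identify the signs, we use the point $k_1=k_2$. By Corollary \ref{corollarysign}, $g(k_2)=0$ only if $\delta=1$. When $\delta\neq1$, the sign of $g(k_2)$ comes from Lemma \ref{lemmasign}, which gives $\sign(1-\delta)$. Since $g$ is decreasing, this alone is not enough. A cleaner route is to rule out a sign change of $g$ on $(1,\sqrt{k_2}]$ and on $[k_2^2,\infty)$, so that it remains to show $g>0$ near $k_1=1^+$ or at one point below $\sqrt{k_2}$, and similarly $g<0$ at one point above $k_2^2$.

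To get these two signs we use a comparison with a limiting configuration. Alternatively, we argue within the same parameter family along $\delta$: for fixed $k_1\le\sqrt{k_2}$, the map $\delta\mapsto c_{k_1,k_2,1,\delta}$ never vanishes, because the bound in Corollary \ref{corobounds} is independent of $\delta$. So its sign is constant in $\delta$. Then we reach a known point: take $k_1$ and $k_2$ with $k_1\le \sqrt{k_2}<k_2$ and move $k_1$ up to $k_2$. Along this path $g$ is decreasing and $g(k_2)$ has sign $\sign(1-\delta)$, which is positive for $\delta<1$. Hence $g(k_1)>g(k_2)>0$ for $\delta<1$. By $\delta$-independence of the sign, $g>0$ on $(1,\sqrt{k_2}]$ for every $\delta$. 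Symmetrically, for $k_1\ge k_2^2>k_2$ and $\delta>1$ we get $g(k_1)<g(k_2)<0$, and the constancy in $\delta$ extends this to every $\delta$.

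By the intermediate value theorem and strict monotonicity, $g$ then has a unique zero $\K(\delta;k_2)\in(\sqrt{k_2},k_2^2)$, and \eqref{signK00} follows from monotonicity. For $\delta=1$, Lemma \ref{lemmasign} gives $g(k_2)=0$, so $\K(1;k_2)=k_2$.

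For the exchange relation \eqref{Kexchange}, we swap the roles of the species. Set $\tilde U(\xi)=V(-\xi/\sqrt{\delta})$ and $\tilde V(\xi)=U(-\xi/\sqrt{\delta})$ in \eqref{standing0}. A standing front for $(k_1,k_2,1,\delta)=(\alpha,\beta,1,\delta)$ becomes a monotone standing front with the correct orientation and limits for $(\beta,\alpha,1,\delta^{-1})$. The factor $\sqrt{\delta}$ moves the diffusion coefficient $\delta$ from $V$ to $U$, and dividing the second equation by $\delta$ normalizes it. Renormalizing by a translation is harmless. Thus $c_{\alpha,\beta,1,\delta}=0$ if and only if $c_{\beta,\alpha,1,\delta^{-1}}=0$, which by uniqueness of the zero is exactly \eqref{Kexchange}.

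The main obstacle is the claim that the sign depends only on $\delta=d/r$ for general $k_1\neq k_2$. This requires checking that the argument of \cite{guo2013} does not use symmetry of the competition coefficients. If it does, we would instead prove directly that $c_{k_1,k_2,r,d}=0$ forces a relation depending only on $d/r$: at $c=0$, the rescaling $\xi\mapsto\xi$ together with dividing the second equation by $r$ exactly gives \eqref{standing0} with $\delta=d/r$. So zero sets depend only on $\delta$, and by continuity of $c$ and the absence of other zeros, the sign pattern in $k_1$ transfers to every $(r,d)$ with the same ratio.
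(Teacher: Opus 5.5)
Your proposal is correct and follows essentially the paper's proof. The shared steps are: Corollary~\ref{corobounds} rules out zeros of $\delta\mapsto c_{k_1,k_2,1,\delta}$ for $k_1\le\sqrt{k_2}$ and $k_1\ge k_2^2$; continuity in $\delta$ carries the sign over from a reference value; the intermediate value theorem and Kan-On's strict decrease in $k_1$ give $\K(\delta;k_2)$; and the species swap gives \eqref{Kexchange}. The paper anchors the sign at $\delta=1$ using $c_{k_2,k_2,1,1}=0$. Your anchoring at some $\delta<1$ (resp.\ $\delta>1$) via Lemma~\ref{lemmasign} is equivalent.

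Two points need fixing.

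First, drop the unverified claim that the scaling argument of \cite{guo2013} works for $k_1\neq k_2$. Your fallback is complete and should be the main argument. At $c=0$ the $(r,d)$ system is exactly \eqref{standing0} with $\delta=d/r$, so by uniqueness of the speed the zero sets agree. The sign then transfers either along the connected ray $\{d/r=\delta\}$, as you do, or by strict monotonicity in $k_1$ at fixed $(r,d)$, as the paper does.

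Second, your dilation in the exchange step is inverted. With $\tilde U(\xi)=V(-\xi/\sqrt\delta)$ and $\tilde V(\xi)=U(-\xi/\sqrt\delta)$ you get $\delta^2\tilde U''+\tilde U(1-\tilde U-\beta\tilde V)=0$ and $\delta\tilde V''+\tilde V(1-\tilde V-\alpha\tilde U)=0$. Dividing by $\delta$ does not turn this into the $(\beta,\alpha,1,\delta^{-1})$ system. The correct change of variables is $\tilde U(\eta)=V(-\sqrt\delta\,\eta)$, $\tilde V(\eta)=U(-\sqrt\delta\,\eta)$. This gives $\tilde U''+\tilde U(1-\tilde U-\beta\tilde V)=0$ and $\delta^{-1}\tilde V''+\tilde V(1-\tilde V-\alpha\tilde U)=0$ directly. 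Alternatively, add one more spatial dilation to your version, which is harmless because zero speed is invariant under rescaling $\xi$.
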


\begin{proof} Let $k>1$. When $k_1=k_2=k$ and $d=r=1$ we have (see e.g., \cite[Theorem 1.1]{guo2013}) that
	$c_{k,k,1,1}=0$. Since the speed is strictly decreasing with respect to $k_1$ \cite{kan1995}, we obtain 
	\begin{equation}\label{diagsign}
		c_{k_1,k_2,1,1}>0\quad\text{ if }1<k_1<k_2  \quad \text{ and }
		\quad c_{k_1,k_2,1,1}<0\quad\text{ if }k_1>k_2.
	\end{equation}
	Next, fix $k_2>1$ and let $r=1$, $d=\delta$. Since $1<\sqrt{k_2}<k_2<k_2^2$,  we can deduce from \eqref{diagsign} that when $\delta=1$, the speed is positive at $k_1=\sqrt{k_2}$ and  is negative at $k_1=k_2^2$. We claim that for all $\delta>0$, 
\begin{equation*}
	c_{\sqrt{k_2},k_2,1,\delta}>0  \quad \text{ and }
	\quad c_{k_2^2,k_2,1,\delta}<0.
\end{equation*}
Indeed, assume that there exists some $\delta>0$ so that $c_{\sqrt{k_2},k_2,1,\delta}=0$. Then there is a standing front for $k_1=\sqrt{k}_2$, which contradicts \eqref{strictbounds}. Therefore,  $c_{\sqrt{k_2},k_2,1,\delta}\neq 0$ for all $\delta>0$. Using Theorem \ref{theosmooth}, the map $\delta \mapsto c_{\sqrt{k_2},k_2,1,\delta}$ is continuous on the connected set $(0,\infty)$ and since $\delta \mapsto c_{\sqrt{k_2},k_2,1,1}>0$, we conclude that the first inequality in the claim holds. With similar arguments, we can deduce the second inequality in the claim. Thanks again to the continuity and the strict monotonicity of the speed with respect to $k_1$, we can deduce  that the map $k_1\mapsto c_{k_1,k_2,1,\delta}$ has a unique zero $\K(\delta;k_2)\in(\sqrt{k_2},k_2^2)$. This proves \eqref{signK00} with $(r,d)=(1,\delta)$. Now, from \eqref{standing0}, we have $\c=0  \Longleftrightarrow c_{k_1, k_2, 1, \delta}=0$. Hence, $\c=0  \Longleftrightarrow k_1=\K(\delta ; k_2)$. Using again the fact that the map $k_1\mapsto \c$ is strictly decreasing imply that  $\c>0$ when $k_1<\K(\delta ; k_2)$ and $\c<0$ when  $k_1>\K(\delta ; k_2)$. Hence \eqref{signK00} holds. 
Next, when $\delta=1$, we already know that $c_{k_2, k_2, 1,1}=0$. By uniqueness of the zero we obtain $\K(1 ; k_2)=k_2$. Finally, we recall the following exchange relation (see e.g., \cite{ ma2019, nakamura2026}. A standing front exists for $(k_1,k_2,\delta)=(\alpha,\beta,\delta)$ if and only if  a standing front exists for $(\beta, \alpha,\delta^{-1})$. The profiles are related by the change of variables $\tilde{U}(\eta)=V(-\sqrt{\delta}\eta)$ and $\tilde{V}(\eta)=U(-\sqrt{\delta}\eta)$.   Thus using the uniqueness of the threshold, $\alpha=\K(\delta;\beta) \Longleftrightarrow c_{\alpha, \beta, 1, \delta}=0\Longleftrightarrow c_{\beta,\alpha, 1, \delta^{-1}}=0 \Longleftrightarrow \beta=\K(\delta^{-1};\alpha)$. Hence \eqref{Kexchange} holds. This completes the proof.
\end{proof}

\section{Monotonicity of the speed with respect to the diffusion ratio $d$}\label{monotonicity}
The aim of this section is to study the monotonicity of the wave speed with respect to the diffusion ratio $d$ and so to prove the Theorem \ref{maintheorem}. We also prove Corollary \ref{corollary1}, Theorem \ref{maintheorem1} and finally deduce Theorem \ref{maintheorem2} and Corollary \ref{corollarymonotonegeneral}.

Let us fix $k_1,k_2>1$ and $r>0$ and define $\Phi:=\Phi_{k_1,k_2,r,d}=(U_{k_1,k_2,r,d},V_{k_1,k_2,r,d})^T$ and  $c:=\c$. Let $\pi_0:=(k_{1,0},k_{2,0}, r_0,d_0)\in Q$ be fixed and consider the profile $\Phi_0:=\Phi_{k_{1,0},k_{2,0},r_0,d_0}$ associated. 

We set $w=\Phi-\Phi_0$. Then from Theorem \ref{theosmooth}, $w\in C^\infty((0,\infty);H^2(\R)^2)$ and since $\Phi_0$ does not depends on $d$, we define $\partial_d\Phi:=\partial_dw\in H^2(\R)^2$.  Moreover, \eqref{compactform} can be rewritten as
\begin{equation}\label{newcomp}
	\D_d (\Phi_0+w)''+c(d) (\Phi_0+w)'+\F_{k_1,k_2,r}(\Phi_0+w)=0.
\end{equation}
Differentiating \eqref{newcomp} with respect to $d$ leads us to
\begin{equation*}
	\left(\begin{array}{ll}0 & 0 \\ 0 & 1\end{array}\right)\Phi''+\D_d  (\partial_d\Phi)''+\partial_dc \Phi'+c(d)(\partial_d\Phi)'+D\F_{k_1,k_2,r}(\Phi)\partial_d \Phi=0.
\end{equation*}
That is
\begin{equation}\label{eq0}
	\L(\partial_d \Phi)+\partial_dc \,\Phi'+\begin{pmatrix} 0\\V''\end{pmatrix}=0,
\end{equation}
where $V:=V_{k_1,k_2,r,d}$. Recall that $\operatorname{ker} \L^*=(\operatorname{Ran} \L)^{\perp}$ and let $\Psi\in \operatorname{ker} \L^*$. Then, from Proposition \ref{propfredholm} we have $\Psi=(-a,b)^T$ with $a,b>0$. Thanks to Remark \ref{remark1}, we have $\langle \Psi, \Phi'\rangle_{L^2}>0$. Therefore, by taking the $L^2$-scalar product with $\Psi$ in \eqref{eq0}, we obtain
\begin{equation}\label{eq01}
	\left \langle\L(\partial_d \Phi), \Psi\right\rangle_{L^2}+\partial_dc \left \langle \Phi', \Psi\right\rangle_{L^2}+\left \langle\begin{pmatrix} 0\\V''\end{pmatrix},\Psi\right\rangle_{L^2}=0.
\end{equation}
The first term in \eqref{eq01} is equal to $0$ and since $\left \langle \Phi', \Psi\right\rangle_{L^2}\neq 0$, we deduce that
\begin{equation*}
	\partial_dc =-\frac{\left \langle\begin{pmatrix} 0\\V''\end{pmatrix},\Psi\right\rangle_{L^2}}{\left \langle \Phi', \Psi\right\rangle_{L^2}}.
\end{equation*}
That is 
\begin{equation}\label{eq02}
\dis 	\partial_dc =-\frac{\dis \int_{\R}V''(\xi)b(\xi)\,d\xi}{\left \langle \Phi', \Psi\right\rangle_{L^2}}.
\end{equation}

Now let us recall that $\L \Phi'=0$. So, if we set $p=-U'>0$ and $q=V'>0$ we obtain that $(p,q)$ solves the system
\begin{equation}\label{syst2new}
	\left\{ 
	\begin{array}{llllll}
		p''+cp'+(1-2U-k_1V)p+k_1Uq=0,\\
		dq''+cq'+r(1-2V-k_2U)q+rk_2Vp=0.
	\end{array} 
	\right.
\end{equation}
Moreover, $(a,b)$ obtained in Proposition \ref{propfredholm} solves
\begin{equation}\label{systab}
	\left\{ 
	\begin{array}{llllll}
		a''-ca'+(1-2U-k_1V)a+rk_2Vb=0,\\
		db''-cb'+r(1-2V-k_2U)b+k_1Ua=0,
	\end{array} 
	\right.
\end{equation}
and 
\begin{equation}\label{limab}
(a,b)(\pm \infty)=0.
\end{equation}
We can rewrite \eqref{eq02} as follows:
\begin{equation}\label{eq03}
	\dis 	\partial_dc =-\frac{\dis \int_{\R}q'(\xi)b(\xi)\,d\xi}{\left \langle \Phi', \Psi\right\rangle_{L^2}}.
\end{equation}

\begin{theorem}\label{theoimportant}
	Let $k_1,k_2>1$, $r>0$ and  $c:=\c$. We assume that 
	\begin{equation}\label{condition}
	c\leq 0	 \; \text{ and }\; c\left(1-\frac{1}{d}\right)\leq 0
	\end{equation}
hold. Then
	\begin{equation}\label{eq04}
		K:=\int_{\R}q'(\xi)b(\xi)d\xi>0.
	\end{equation}
\end{theorem}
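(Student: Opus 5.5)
The plan is to turn $K$ into the integral of a Wronskian-type quantity and then show that this quantity is positive pointwise. Integrating by parts, using $(b,q)(\pm\infty)=0$ from Lemma \ref{lemdecay} and \eqref{limab}, gives $\int_\R q'b\,d\xi=-\int_\R qb'\,d\xi$. Hence
\begin{equation*}
K=\frac12\int_\R (bq'-b'q)\,d\xi=\frac{1}{2d}\int_\R W_2\,d\xi,\qquad W_2:=d(bq'-b'q).
\end{equation*}
I would also introduce the companion quantity $W_1:=a'p-ap'$. It suffices to prove $W_2>0$ on $\R$; in fact $W_2\geq0$ and $W_2\not\equiv0$ is enough.

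Next I would compute the derivatives of $W_1$ and $W_2$ from \eqref{syst2new} and \eqref{systab}. Substituting $dq''$ and $db''$ (respectively $p''$ and $a''$), the diagonal reaction terms cancel and one gets
\begin{equation*}
W_2'=-c\,(bq)'+S,\qquad W_1'=c\,(ap)'+S,\qquad S:=k_1Uaq-rk_2Vpb.
\end{equation*}
Subtracting these, integrating, and using that all terms vanish at $\pm\infty$ gives the exact relation
\begin{equation*}
W_1-W_2=c\,(ap+bq).
\end{equation*}
Since $a,b,p,q>0$, this yields $W_2\geq W_1$ whenever $c\leq0$. When $c=0$ it gives $W_1=W_2=:W$, which is the quantity appearing in \eqref{derivK}. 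The task therefore reduces to a positivity statement for $W_1$, or directly for $W_2$.

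For positivity I would use the ratios $\alpha:=a/p$ and $\beta:=b/q$. With these, $W_1=p^2\alpha'$, $W_2=-dq^2\beta'$ and $S=pq\,(k_1U\alpha-rk_2V\beta)$. The goal becomes: $\alpha$ is increasing and $\beta$ is decreasing. Writing the equations for $a=\alpha p$ and $b=\beta q$ and using \eqref{syst2new}, one obtains a coupled first-order system for $(W_1,W_2)$, equivalently for $(\alpha',\beta')$. The forcing term $S$ couples them with a definite sign pattern, since $\alpha'$ and $-\beta'$ enter $S'$ cooperatively. I would then argue by contradiction using a maximum principle. Suppose $\min_\R W_2<0$. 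Since $W_2(\pm\infty)=0$, the minimum is attained at an interior point $\xi_0$, where $W_2'(\xi_0)=0$. Using $W_1\leq W_2$ (from $c\leq0$) together with the sign information carried by $S$, I would derive $W_2''(\xi_0)<0$ or a similar contradiction.

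The hypothesis $c(1-1/d)\leq0$ enters exactly here. When $c<0$ it forces $d\geq1$. After the rescaling that puts the $b$-equation in the form of the $a$-equation (dividing by $d$), the extra first-order terms $-c(1/d)b'$ versus $-ca'$ then have the right sign to be absorbed by the comparison $W_1\leq W_2$. The standing case $c=0$ is handled by the same argument without these terms.

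The main obstacle is this last step: fixing the sign of $W_2$ (or $W_1$), because the forcing $S$ has no a priori sign. If the direct maximum principle on $W_2$ fails, I would first try to prove $W_1>0$ instead, by showing that $\alpha=a/p$ is monotone from its limiting values at $\pm\infty$. Those limits are determined by the exponential asymptotics of $a$ and $p$ at the stable equilibria $E_\pm$. After that, $W_2\geq W_1>0$ concludes, which gives $K>0$.
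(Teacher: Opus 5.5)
Your reduction is correct and coincides with the first half of the paper's proof. The formula $K=\frac{1}{2d}\int_\R W_2\,d\xi$ holds, as do the identities $W_1'=c(ap)'+S$, $W_2'=-c(bq)'+S$ and $W_1-W_2=c(ap+bq)$.

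The gap is the decisive step, the sign of $W_2$. You only announce it, and the route you sketch does not close when $c<0$. At an interior minimum $\xi_0$ of $W_2$ you have $S(\xi_0)=c(bq)'(\xi_0)$ and $W_2''=S'-c(bq)''$. Computing $S'$ and eliminating $a'$ and $q'$ through $W_1$ and $W_2$ gives
\[
W_2''(\xi_0)=c\Big[\Big(\frac{p'}{p}+\frac{b'}{b}\Big)(bq)'-(bq)''\Big]+\frac{k_1Uq}{p}\,W_1+\frac{k_1Ua}{db}\,W_2-pq(k_1a+rk_2b).
\]
The bracket has no sign, so $W_1\le W_2<0$ produces no contradiction unless $c=0$. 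In that case $W_1=W_2$ and your sketch does work. Your explanation of how $c(1-\frac1d)\le0$ enters is not an argument either. The fallback of proving $W_1>0$ is not a sensible target. Indeed, $W_1=W_2+c(ap+bq)$ carries the negative term $c\,ap$. In the uncoupled scalar analogue, whose adjoint kernel is $e^{c\xi}p$, one has exactly $W_1=c\,ap<0$.

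The missing idea is to apply the minimum principle to neither $W_1$ nor $W_2$, but to the quantity whose derivative is exactly $S$. That quantity is $G:=W_1-c\,ap=W_2+c\,bq$, which vanishes at $\pm\infty$. Differentiate $G'=S$ once more and substitute $a'=\frac{G}{p}+\frac{p'}{p}a+ca$ and $q'=\frac{G}{db}+\frac{b'}{b}q-\frac{c}{d}q$. This gives the paper's \eqref{siha3}:
\[
G''-\Big(\frac{b'}{b}+\frac{p'}{p}\Big)G'-k_1U\Big(\frac{a}{db}+\frac{q}{p}\Big)G=q\Big[k_1c\Big(1-\frac1d\Big)Ua-p(k_1a+rk_2b)\Big].
\]
The term $+ca$ from $a'$ and the term $-\frac{c}{d}q$ from $q'$ combine into $k_1c(1-\frac1d)Uaq$. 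This is exactly where $c(1-\frac1d)\le0$ is used: it makes the right-hand side strictly negative. The zeroth-order coefficient is negative, so $G$ cannot attain a nonpositive minimum, and hence $G>0$. Only afterwards is $c\le0$ used, via $W_2=G-c\,bq\ge G>0$. Equivalently, $K=\frac{1}{2d}\int_\R(G-c\,bq)\,d\xi>0$.
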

\begin{proof}
We first observe that an integration by parts over $\R$ while using Lemma \ref{lemdecay} and \eqref{limab} gives
\begin{equation}\label{important0}
	2K= \int_{\R}\left(q'(\xi)b(\xi)-q(\xi)b'(\xi) \right)d\xi. 
\end{equation}
Now, we multiply the first equation in \eqref{syst2new} by $a$ and the first equation in \eqref{systab} by $p$ and we subtract the resulting equations to obtain
\begin{equation*}
a''p-p''a-c(p'a+a'p)=k_1Uqa-rk_2Vbp.
\end{equation*}
since $(a'p-p'a)'=a''p-p''a$, we can rewrite the previous equation as 
\begin{equation}\label{siha0}
	(a'p-p'a-cap)'=k_1Uqa-rk_2Vbp.
\end{equation}
Next, multiplying the second equation in \eqref{syst2new} by $b$ and the second equation in \eqref{systab} by $q$ and combining the resulting equations, we obtain
\begin{equation}\label{siha1}
	[d(q'b-b'q)+cbq]'=k_1Uqa-rk_2Vbp.
\end{equation}
On the one hand, combining \eqref{siha0} and \eqref{siha1} leads us to 
\begin{equation*}
	[d(q'b-b'q)+cbq]'=(a'p-p'a-cap)'.
\end{equation*}
So, the functions $\xi \mapsto d(q'(\xi)b(\xi)-b'(\xi)q(\xi))+cb(\xi)q(\xi)$ and $\xi \mapsto a'(\xi)p(\xi)-p'(\xi)a(\xi)-ca(\xi)p(\xi)$ have the same derivatives. Using Lemma \ref{lemdecay} and thanks to the fact that $\xi \mapsto a'(\xi)$ and $\xi\mapsto b'(\xi)$ are bounded (because $H^1(\R)\hookrightarrow  C_b(\R)$), we can deduce that these two functions have the same limit $0$ at infinity. Therefore, these two functions must be equal. We set for all $\xi \in \R$,
\begin{equation}\label{defG}
G(\xi):=d(q'(\xi)b(\xi)-b'(\xi)q(\xi))+cb(\xi)q(\xi)=a'(\xi)p(\xi)-p'(\xi)a(\xi)-ca(\xi)p(\xi).
\end{equation} 
Then we have 
\begin{equation}\label{limderivG}
\lim_{\xi\to \pm\infty}G(\xi)=0  \;  \text{ and } \; G'(\xi)= k_1U(\xi)q(\xi)a(\xi)-rk_2V(\xi)b(\xi)p(\xi).
\end{equation}
Differentiating the equation in \eqref{limderivG}, gives
\begin{equation}\label{siha2}
G''=k_1\left[-pqa+ Uq'a+Uqa'\right] -rk_2\left[qbp+Vb'p+Vbp'\right],
\end{equation}
where we dropped the variable $\xi$ for clarity.
From \eqref{defG}, we can write $a'=\frac{G}{p}+\frac{p'}{p}a+ca$ and $q'=\frac{G}{d b}+\frac{b'}{b}q-\frac{c}{d}q$. Substituting these values in \eqref{siha2} lead us to
\begin{equation*}
G''=\left[ \left( \frac{b'}{b}+\frac{p'}{p}\right)G' +k_1U\left( \frac{a}{d b}+\frac{q}{p}\right) G+ k_1c\left(1-\frac{1}{d}\right)Uqa -pq(k_1a+rk_2b)\right].
\end{equation*}
That is 
\begin{equation}\label{siha3}
\M G:=G''-\left( \frac{b'}{b}+\frac{p'}{p}\right)G' -k_1U\left( \frac{a}{d b}+\frac{q}{p}\right) G=q\left[ k_1c\left(1-\frac{1}{d}\right)Ua -p(k_1a+rk_2b)\right].
\end{equation}
Since $k_1,k_2>1, q,U,a,p,r,b>0$,  if $c\left(1-\frac{1}{d}\right)\leq0$, then the right hand side of \eqref{siha3} is strictly negative. 
That is  
\begin{equation}\label{siha4}
\M G(\xi)<0\; \text{ for all }\;  \xi\in \R.
\end{equation}
Note that the same condition  and \eqref{siha3} imply that $G$ is not identically $0$. Now, we claim that $G>0$ on $\R$. Indeed, suppose that there is $\xi_0\in \R$ so that $G(\xi_0)\leq 0$.  Then because  $\dis \lim_{\xi\to \pm\infty}G(\xi)=0$ and $G$ is continuous, $G$ attains a nonpositive minimum at some point $\xi_1 \in \R$. Thus at $\xi_1$, we have 
\begin{equation*}
G(\xi_1)\leq 0, \quad G'(\xi_1)=0 \text{ and } G''(\xi_1)\geq 0.
\end{equation*}
Since $k_1U\left( \frac{a}{d b}+\frac{q}{p}\right)>0$, we obtain from \eqref{siha3} that $\M G(\xi_1)\geq 0$. This contradicts \eqref{siha4}. Hence $G>0$ on $\R$. Now, from \eqref{defG}, we have 
\begin{equation*}
q'(\xi)b(\xi)-b'(\xi)q(\xi)=\frac{1}{d}\left( G(\xi)-cb(\xi)q(\xi)\right),
\end{equation*}
which combined with \eqref{important0} imply 
\begin{equation*}
K= \frac{1}{2d}\int_{\R}\left( G(\xi)-cb(\xi)q(\xi)\right)d\xi. 
\end{equation*}
Combining $G>0$ and $c\leq 0$, we deduce that $K>0$.  This completes the proof.
\end{proof}

We can now prove our main result.
\begin{proof}[Proof of Theorem \ref{maintheorem}]Let $k_1,k_2>1$ and $r>0$. Then from \eqref{eq03}, we have
\begin{equation*}
	\dis 	\partial_dc =-\frac{\dis \int_{\R}q'(\xi)b(\xi)\,d\xi}{\left \langle \Phi', \Psi\right\rangle_{L^2}}.
\end{equation*}
Remark \ref{remark1} implies that the denominator of the right hand side of the previous identity is strictly positive.  Moreover, using Theorem \ref{theoimportant}, we deduce that $K=\int_{\R}q'(\xi)b(\xi)\,d\xi>0$ when $c\leq 0$ and $c\left(1-\frac{1}{d}\right)\leq 0$. Hence $	\partial_dc<0$. This completes the proof. 
\end{proof}

\begin{proof}[Proof of Corollary \ref{corollary1}] For the symmetric case $k_1=k_2=k$, we have from Lemma \ref{lemmasign} that $c\leq 0$ if and only if $d\geq r$.  So the conditions \eqref{condition0} of Theorem \ref{maintheorem} are equivalent to $d\geq r$ and $d\geq 1$ or $d=r$ and $d\leq 1$.  Therefore,  applying Theorem \ref{maintheorem}, we deduce \eqref{eq5a}. 
\end{proof}

\begin{remark}\label{remark2}
We observe from Corollary \ref{corollary1} that we have $\left. \partial_d c_{k,k,r,d} \right|_{d=r}<0$ and from Lemma \ref{lemmasign}, we have $c_{k,k,r,r}=0$. Because $d\mapsto \c$ is smooth on $(0, \infty)$, we can deduce that this function is still strictly decreasing in a neighbourhood of $r$. However, our result does not provide the monotonicity of the function $d\mapsto c_{k,k,r,d}$ on the regions
\begin{equation*}
\begin{cases}
	(0, r-\varepsilon] \cup[r+\varepsilon, 1), & 0<r<1, \\ (0, r-\varepsilon], & r \geq 1,\end{cases}
\end{equation*}
for some $\varepsilon>0$ sufficiently small.  Therefore the global monotonicity remains an open question. 
\end{remark}

\begin{proof}[Proof of Theorem \ref{maintheorem1}]
Let us fix $k_2>1$, $\delta_0>0$ and set $k_{1,0}=\K(\delta_0;k_2)$. We define $F(k_1,\delta):= c_{k_1,k_2,1,\delta}$. Then, thanks to Theorem \ref{theosmooth}, we have that $F$ is of class $C^\infty$ on $(1,\infty)\times (0,\infty)$. Moreover, Proposition \ref{propthreshold} gives the existence of a unique $\K(\delta;k_2) \in (1,\infty)$ so that $F(\K(\delta;k_2),\delta)=0$.  Next, we  let $\Phi_0=(U_0,V_0)^T$ be the standing front at $(k_{1,0},k_2,1,\delta_0)$.  Differentiating \eqref{newcomp} with respect to $k_1$ at $(k_{1,0},\delta_0)$ and using similar arguments as above lead us to
\begin{equation*}
\partial_{k_1}F(k_{1,0},\delta_0)
=-\frac{\dis\int_{\R} aU_0V_0d\xi}
{\left \langle \Phi'_0, \Psi\right\rangle_{L^2}},
\end{equation*}
where $\Psi=(-a,b)^T\in\operatorname{ker}\L^*$ with $a,b>0$. Using Remark \ref{remark1}, we have that the denominator of the previous equality is strictly positive. The numerator is also positive because $a, U_0,V_0>0$. This implies that $\partial_{k_1}F(k_{1,0},\delta_0)<0$. Hence $\partial_{k_1}F(k_{1,0},\delta_0)\neq 0$. We apply the implicit function theorem to deduce  the existence of two intervals $I\subset(0,\infty)$ and $J\subset(1,\infty)$ containing respectively $\delta_0$ and $k_{1,0}$ and a unique function $\kappa\in C^\infty(I;J)$ such that 
\begin{equation*}
\kappa(\delta_0)=k_{1,0} \quad \text{ and } \quad  F(\kappa(\delta),\delta)=0, \quad\text{for every }\delta\in I.
\end{equation*}
The uniqueness of the threshold given in Proposition \ref{propthreshold} implies that  $\kappa(\delta)=\K(\delta;k_2)$ for all $\delta\in I$. So, $\delta \mapsto \K(\delta;k_2)$ is of class $C^\infty$ in the neighbourhood of $\delta_0$. Since $\delta_0$ was arbitrary, it follows that $\delta \mapsto \K(\delta;k_2)$ is of class $C^\infty$ on $(0,\infty)$.

Now, let $\Phi=(U,V)^T$ be the standing front at $(k_{1},k_2,r,d)=(\K(\delta;k_2),k_2,1,\delta)$. Differentiating \eqref{standing0} with respect to $\delta$ gives
\begin{equation}\label{eq05}
	\L_0(\partial_\delta\Phi)-\partial_\delta\K
	\begin{pmatrix}UV\\0\end{pmatrix}
	+\begin{pmatrix}0\\V''\end{pmatrix}=0,
\end{equation}
where $\L_0 h:=\D_{\delta}h''+D \F_{\K,k_2,r}h$. Using Proposition \ref{propfredholm} with $c=0$, we take the $L^2$-scalar product with $\Psi=(-a,b)^T\in\ker\L^*_0$ in \eqref{eq05}, we obtain
\begin{equation*}
\partial_\delta\K\int_{\R} aUVd\xi+\int_\R bV''d\xi=0.
\end{equation*}
That is 
\begin{equation}\label{eq06}
\partial_\delta\K(\delta;k_2) =-\frac{\dis\int_{\R} bV''d\xi} {\dis\int_{\R} aUVd\xi}.
\end{equation}
Since $c=0$, $r=1$ and $d=\delta$, the proof of Theorem \ref{theoimportant} gives $W:=G=a'p-ap'=\delta(bq'-b'q)>0$ and $\dis \int_{\R} bV''d\xi=\frac{1}{2d} \int_{\R}W(\xi)d\xi>0$.  Substituting this expression in \eqref{eq06} gives
\begin{equation*}
\partial_\delta\K(\delta;k_2) =-\frac{\dis\int_{\R }Wd\xi}{2\delta\dis\int_\R aUVd\xi}<0.
\end{equation*}
So, \eqref{derivK} holds and $\delta \mapsto \K(\delta,k_2)$ is a decreasing bijection. \eqref{zeroarie} and \eqref{bound1} follow from Proposition \ref{propthreshold}. Now, it remains to establish the limits in \eqref{limit1}. 	These two endpoint limits are also contained in the recent work \cite[Theorem 1.3]{ma2026}. 

We first prove the limit at zero. We note that in \cite[Theorem 23 and Remark 24]{alzahrani2010}, the competition coefficients $\alpha$ and $\beta$ corresponds respectively to $k_1$ and $k_2$, their growth coefficient is equal to $1$ and their diffusion coefficient  $\varepsilon^2$ corresponds to $\delta$ here. Moreover, their front has the opposite orientation to ours. So, after reversing the sign of the speed, their result imply that  for every fixed $k_1\neq k_2^2$,
\begin{equation}\label{eq07}
	\sgn(c_{k_1,k_2,1,\delta})=\sgn(k_2^2-k_1),
\end{equation}
for all $\delta>0$ sufficiently small. Let $0<\eta<k_2^2-1$. Then applying \eqref{eq07} with $k_1=k_2^2-\eta>1$, we obtain  for all $\delta>0$ sufficiently small
\begin{equation*}
c_{k_2^2-\eta,k_2,1,\delta}>0.
\end{equation*}
Similarly, applying \eqref{eq07} with $k_1=k_2^2+\eta>1$ gives
\begin{equation*}
	c_{k_2^2+\eta,k_2,1,\delta}<0.
\end{equation*}
Thanks to \eqref{signK00}, we can deduce from the above identities that for all $\delta>0$ sufficiently small,
\begin{equation*}
k_2^2-\eta<\K(\delta;k_2)<k_2^2+\eta.
\end{equation*}
Since $\eta>0$ is arbitrary, we conclude that
\begin{equation}\label{ewen}
	\lim_{\delta\downarrow0}\K(\delta;k_2)=k_2^2.
\end{equation}

Now, we prove the limit at $+\infty$.  We first observe that since 	$\delta\mapsto\K(\delta;k_2)$ is strictly decreasing and bounded below by $\sqrt{k_2}$, the limit $\Lambda:=\dis \lim_{\delta\to+\infty}\mathcal K(\delta;k_2)$ exists and is such that  $\Lambda\geq\sqrt{k_2}>1$.  We let $\delta=\varepsilon^{-1}$ and we define define $\kappa_\varepsilon:=\K(\varepsilon^{-1};k_2)$.  Then $\dis\lim_{\epsilon\downarrow0}\kappa_\varepsilon=\Lambda$. Thanks to  \eqref{Kexchange}, we have $\K(\varepsilon;\kappa_\varepsilon)=k_2$. We claim that for all $\varepsilon>0$, the function $\beta \mapsto\K(\varepsilon;\beta)$ is strictly increasing.  Indeed, let $\beta_1<\beta_2$ and set $\alpha_1:=\K(\varepsilon;\beta_1)$. Then, $c_{\alpha_1, \beta_1,1,\varepsilon}=0$. Since the speed is strictly increasing with respect to $k_2$ \cite{kan1995}, we have $c_{\alpha_1, \beta_2,1,\varepsilon}>0$. We deduce from \eqref{signK00}, that $\alpha_1<\K(\varepsilon;\beta_2)$. Hence $\K(\varepsilon;\beta_1)<\K(\varepsilon;\beta_2)$. This proves the claim. Now, let $0<\eta<\Lambda-1$. For $\varepsilon>0$ sufficiently small, we have $\Lambda-\eta<\kappa_\varepsilon<\Lambda+\eta$. Thus, using the previous monotonicity, we obtain 
\begin{equation*}
\K(\varepsilon;\Lambda-\eta)<\K(\varepsilon;\kappa_\varepsilon)<\K(\varepsilon;\Lambda+\eta).
\end{equation*}
That is 
\begin{equation}\label{ewen1}
	\K(\varepsilon;\Lambda-\eta)<k_2<\K(\varepsilon;\Lambda+\eta).
\end{equation}
We note that $\Lambda-\eta, \Lambda+\eta>1$. Then, we take the limit as $\varepsilon\downarrow 0$ in \eqref{ewen1} and use \eqref{ewen} to deduce that
\begin{equation*}
(\Lambda-\eta)^2\leq k_2\leq(\Lambda+\eta)^2. 
\end{equation*}
Finally, letting $\eta\downarrow0$ in the previous inequality gives $\Lambda^2=k_2$.  Hence, $	\dis \lim_{\delta\to+\infty}\K(\delta;k_2)=\sqrt{k_2}.$ This completes the proof.
\end{proof}

We now prove the Theorem \ref{maintheorem2}. 
	
	\begin{proof}[Proof of Theorem \ref{maintheorem2}] 
		Firstly, if $k_1\leq\sqrt{k_2}$, then $k_1<\K(\delta;k_2)$ by Proposition \ref{propthreshold}, and so $c_{k_1,k_2,r,d}>0$ by \eqref{signK00}. Next, if $k_1\geq k_2^2$, then $k_1>\K(\delta;k_2)$ and hence $c_{k_1,k_2,r,d}<0$.  Now, assume that $\sqrt{k_2}<k_1<k_2^2$. By Theorem \ref{maintheorem1}, the function $\delta \mapsto \K(\delta;k_2)$ is a strictly decreasing bijection from $(0,\infty)$ onto $(\sqrt{k_2},k_2^2)$. Hence, there exist a unique  $\delta^*>0$ so that	$\mathcal K(\delta^*;k_2)=k_1$. Using \eqref{signK00}, we obtain	
		\begin{equation*}
			\c \left\{ 
			\begin{array}{llllll}
				>0, &  0<\frac{d}{r}<\delta^*,\\
				=0, & \frac{d}{r}=\delta^*,\\
				<0, &\frac{d}{r}>\delta^*.
			\end{array} 
			\right.
		\end{equation*}
		Finally, the front associated with $(k_1,k_2,\delta^*)$ is standing, so \eqref{signrel} gives
		$\sign(\delta^*-1)=\sign(k_2-k_1)$. This completes the proof.
	\end{proof}

We end this paper with the proof of Corollary \ref{corollarymonotonegeneral}.
\begin{proof}[Proof of Corollary \ref{corollarymonotonegeneral}]
Firstly, we assume that $k_1\geq k_2^2$.  Then Theorem \ref{maintheorem2} gives $\c<0$ for every $d>0$. If $d\geq 1$, then $\c\left(1-\frac{1}{d}\right) \leq 0$. We thus apply Theorem \ref{maintheorem} to deduce that $\partial_d\c<0$ for every $d \geq 1$. Thus $(i)$ holds. Now, assume that $\sqrt{k_2}<k_1<k_2^2$ and let $d^*:=r\delta^*(k_1,k_2)$.	Then by Theorem \ref{maintheorem2}, we have 
\begin{equation}\label{ewen3}
	\c
	\begin{cases}
		>0,&0<d<d^*,\\
		=0,&d=d^*,\\
		<0,&d>d^*.
	\end{cases}
\end{equation}
at $d=d*$, $\c=0$ and the assumptions of Theorem \ref{maintheorem} are satisfied, so $\partial_d\c|_{d=d^*}<0$.  Next, suppose that $d^*\geq 1$. Then for every $d>d^*$, $d\geq 1$ and by \eqref{ewen3} we obtain $\c<0$. In addition, $c_{k_1,k_2,r,d}\left(1-\frac{1}{d}\right) \leq 0$. So, Theorem \ref{maintheorem} gives $\partial_d\c<0$ for all $d\geq d^*$. Hence, $d\mapsto \c$ is strictly decreasing on $[d^*, \infty)$. Finally, suppose that $d^*<1$. Then for every $d \geq 1$, we have $d>d^*$ and \eqref{ewen3} implies $\c<0$. We have again $\c\left(1-\frac{1}{d}\right) \leq 0$. Theorem \ref{maintheorem} implies $\partial_d\c<0$ for all $d \geq 1$. Thus, $d\mapsto \c$ is strictly decreasing on $[1,\infty)$. This completes the proof. 
\end{proof}


\bibliographystyle{abbrv}
\bibliography{References}


\end{document}